\theoremstyle{remark}
\newtheorem{para}{\bf}[subsection]
\newtheorem{remarks}[para]{\bf Remarks}
\newtheorem{remark}[para]{\bf Remark}
\newtheorem{conjecture}[para]{\bf Conjecture}
\theoremstyle{definition}
\theoremstyle{plain}
\newtheorem{theorem}[para]{Theorem}
\newtheorem{lemma}[para]{Lemma}
\newtheorem{corollary}[para]{Corollary}
\newtheorem{proposition}[para]{Proposition}
\newenvironment{numequation}{\addtocounter{para}{1}
\begin{equation}}{\end{equation}}
\newtheorem{theorem}{Theorem}[subsection]
\newtheorem{corollary}[theorem]{Corollary}
\newtheorem{lemma}[theorem]{Lemma}
\newtheorem{proposition}[theorem]{Proposition}
\newtheorem{remark}[theorem]{Remark}
\newtheorem{para}[theorem]{\bf}
\newenvironment{numequation}{\addtocounter{theorem}{1}
\begin{equation}}{\end{equation}}
\newcommand{\bbP}{{\mathbb P}}
\newcommand{\bbQ}{{\mathbb Q}}
\newcommand{\bbZ}{{\mathbb Z}}
\newcommand{\bG}{{\bf G}}
\newcommand{\bP}{{\bf P}}
\newcommand{\cG}{{\mathcal G}}
\newcommand{\cM}{{\mathcal M}}
\newcommand{\cO}{{\mathcal O}}
\newcommand{\cS}{{\mathcal S}}
\newcommand{\ovE}{{\overline E}}
\def\Z{\mathbb Z}
\def\Q{\mathbb Q}
\def\0{\emptyset}
\def\a{\alpha}
\def\b{\beta}
\def\s{\sigma}
\def\vep{\varepsilon}
\def\vp{\varphi}
\def\ab{{\rm ab}}
\def\alg{{\rm alg}} 
\def\an{{\rm an}}
\def\cont{{\rm cont}}
\def\diag{\operatorname{diag}}
\def\Gal{\operatorname{Gal}}
\def\GL{\operatorname{GL}}
\def\Hom{\operatorname{Hom}}
\def\hra{\hookrightarrow}
\def\Ind{\operatorname{Ind}}
\def\ind{\operatorname{ind}}
\def\lalg{{\rm lalg}} 
\def\lra{\longrightarrow}
\def\midc{\; | \:}
\def\ophi{{\overline{\phi}}}
\def\opsi{{\overline{\psi}}}
\def\Qp{{\bbQ_p}}
\def\Qpb{{\overline{\bbQ}_p}}
\def\res{{\rm res}}
\def\sm{{\rm sm}}
\def\st{{\rm st}}
\def\St{{\rm St}}
\def\sub{\subset}
\def\supp{\operatorname{supp}}
\def\Sym{\operatorname{Sym}}
\def\triv{{\bf 1}}
\def\vpi{{\varpi}}
\def\WD{\operatorname{WD}}
\def\x{{\times}}
\title{$p$-adic Banach space representations of $SL_2(\Qp)$}
\author{Dubravka Ban}
\address{School of Mathematical and Statistical Sciences, 1245 Lincoln Drive, Southern Illinois University, Carbondale, Illinois 62901, U.S.A.}
\email{dban@siu.edu}
\author{Matthias Strauch}
\address{Department of Mathematics, Indiana University, Rawles Hall, Bloomington, Indiana 47405, U.S.A.}
\email{mstrauch@indiana.edu}
\begin{document}

\begin{abstract} We consider the restriction to $SL_2(\Qp)$ of an irreducible $p$-adic unitary Banach space representation $\Pi$ of $GL_2({\mathbb Q}_p)$. If $\Pi$ is associated, via the $p$-adic local Langlands correspondence, to an absolutely irreducible 2-dimensional Galois representation $\psi$, then the restriction of $\Pi$ decomposes as a direct sum of $r \le 2$ irreducible representations. The main result of this paper is that $r$ is equal to the cardinality $s$ of the centralizer in $PGL_2$ of the projective Galois representation $\opsi$ associated to $\psi$, and the restriction is multiplicity-free, except if $\psi$ is triply-imprimitive, in which case the restriction of $\Pi$ is a direct sum of two equivalent representations. 
\end{abstract}
\begin{comment}
Let $F/{\mathbb Q}_p$ be a finite extension. This paper is about continuous admissible $p$-adic Banach space representations $\Pi$ of $G = GL_n(F)$ and their restriction to $H = SL_n(F)$. We first show that the restriction of any such absolutely irreducible $G$-representation decomposes as a finite direct sum of irreducible $H$-representations.
\end{comment}
\maketitle

\tableofcontents

\section{Introduction}

\addtocounter{subsection}{1}

In this paper, we derive a classification of 
absolutely irreducible admissible unitary Banach space representations of $H=SL_2(\Qp)$ (Propositions \ref{prop:ordinary} and \ref{prop:class}) from the classification of the corresponding representations of $G = GL_2(\Qp)$, cf. \cite[1.1]{ColDoPa}. The classification of the latter representations is in turn part of the $p$-adic Langlands correspondence for $GL_2(\Qp)$. ``Unitary'' refers to the condition that the group action is norm-preserving. Furthermore, throughout our paper the coefficient fields are finite extensions of $\Qp$.

Recall that an absolutely irreducible admissible Banach space representation is called {\em ordinary} if it is a subquotient of a continuous parabolic induction of a unitary character.
If $\Pi$ is an ordinary representation of $G$, we prove that  $\Pi|_H$ is absolutely irreducible.
Moreover, if $\Pi$ and $\Pi'$ are inequivalent  ordinary representation of $G$, then $\Pi|_H$
and $\Pi'|_H$ are also inequivalent. Consequently, nontrivial ordinary representations of $H$ are  classified by the inducing data, namely the unitary characters of $\Q_p^\times$
(see Proposition~\ref{prop:ordinary}).

For non-ordinary representations of $H$, 
we start with an absolutely irreducible representation
$\psi: \cG_\Qp = \Gal(\Qpb/\Qp) \to GL_2(E)$,
where $E$ is a finite extension of $\Q_p$.
By Colmez' $p$-adic Langlands correspondence there is attached to $\psi$ a unitary Banach space representation $\Pi(\psi)$ of $G$, cf. \cite[0.17]{Colmez10}.
The functor $$\psi \rightsquigarrow \Pi(\psi)$$ induces a bijection
between the set of equivalence classes of 
absolutely irreducible continuous two-dimensional $E$-representations of $\cG_\Qp$ and
absolutely irreducible non-ordinary admissible unitary $E$-Banach space representations of $GL_2(\Qp)$ \cite[1.1]{ColDoPa}.

Motivated by the description of $L$-packets for $H$
in the classical local Langlands correspondence
\cite{LL}, for a given $\psi:  \cG_\Qp \to GL_2(E)$ and
$\Pi=\Pi(\psi)$, we consider the corresponding 
projective representation $\opsi: \cG_\Qp \to PGL_2(E)$
and the restriction $\Pi|_H$.
Our main result is the following theorem.

\begin{theorem}[Theorem~\ref{Main}]\label{MainIntr}  Let $\psi: \cG_\Qp \to GL_2(E)$ be an absolutely irreducible continuous representation and $\Pi=\Pi(\psi)$.  Denote by $\opsi: \cG_\Qp \to PGL_2(E)$ the corresponding projective representation. Then, after possibly replacing $E$ by a finite extension, we have: 

\begin{enumerate}
    \item[(i)]$\Pi|_H$ is either absolutely irreducible or it decomposes as a direct sum of two absolutely irreducible representations. Let $r$ be the number of components of $\Pi|_H$ (counting multiplicities).

    \item[(ii)] Let $S_\opsi$ be the centralizer in $PGL_2(\ovE)$ of the image of $\opsi$. Then $S_\opsi$ is finite, with cardinality 1,2, or 4.

    \item[(iii)] If $\psi$ is not triply-imprimitive, then  $\Pi|_H$ is multiplicity free. Moreover,
    \[
        r=|S_\opsi| \leq 2.
    \]
    That is, the number of components of $\Pi|_H$
    equals the cardinality of $S_\opsi$.

    \item[(iv)] If $\psi$ is  triply-imprimitive, then  $|S_\opsi|=4$ and $\Pi$ restricts to $H$ with multiplicity two. More specifically,
    $\Pi|_H$  decomposes as a direct sum of two equivalent irreducible representations. \qed

\end{enumerate}

\end{theorem}

For the definition of a triply-imprimitive two-dimensional Galois representation, see \ref{defn-triply}.

\begin{remark} The cases described in Theorem~\ref{MainIntr} match the classical Langlands correspondence for $GL_1(D)$, where $D$ is the quaternion division algebra over $\Qp$ \cite[p. 751, lines 1-3, Lemma 7.1]{LL}. In particular, if $\s$ is an admissible smooth representation of $GL_1(D)$ corresponding to a triply-imprimitive representation of the Weil group, then $\s$ restricts to $SL_1(D)$
with multiplicity two. In contrast to this, restrictions of smooth irreducible representations of $G$ to $H$ are always multiplicity free \cite[Lemma 2.6]{LL}. 
\end{remark}

The proof of Theorem~\ref{MainIntr} is based on the compatibility of the $p$-adic Langlands correspondence with twisting by characters: for every continuous character $\chi$ of $\cG_\Qp$ there is an isomorphism of Banach space representations
\begin{numequation}\label{twist}
    \Pi(\chi \otimes \psi) \cong (\chi \circ \det) \otimes \Pi(\psi).
\end{numequation}
In addition, we need certain properties of the restriction of representations from $GL_2(\Qp)$ to $SL_2(\Qp)$. We obtain those as special cases of some general results about the restriction of irreducible admissible Banach space representations of $p$-adic Lie groups to subgroups of finite index. First, we consider a $p$-adic Lie group $G$ with an open normal subgroup $H$ of finite index. If $\Pi$ is an irreducible admissible $E$-Banach space representation of $G$, it is shown that $\Pi|_H$ decomposes as a direct sum of  irreducible admissible representations of $H$ (Proposition~\ref{restrictionPi}).
Next, we specialize to the case when $G/H$ is finite abelian
and $\Pi$ is absolutely irreducible. After possibly replacing $E$ by a finite extension, the description of $\Pi|_H$ parallels 
the classical case of smooth representations. 
In particular, it depends on the group of characters
\[
    X_H(\Pi) = \{ \chi \text{ a character of } G \text{ trivial on } H \mid 
    \chi \otimes \Pi \cong \Pi \;,\}
\]
see Proposition~\ref{restrictionPiAbs} and compare to  \cite[Section 2]{gk82}.
Going back to $G=GL_2(\Qp)$ and $H=SL_2(\Qp)$, 
we recall that the representation $\Pi=\Pi(\psi)$ is absolutely irreducible, and by
\cite[1.1]{DospinescuSchraen} it has thus a central character. Furthermore, the group $G/ZH$, where $Z$ is the center of $G$, is finite abelian. If $(\chi \circ \det) \otimes \Pi \cong \Pi$ the character 
$\chi \circ \det$ is necessarily trivial on $Z$. Therefore, 
the decomposition of $\Pi|_H$ depends on the finite group $X(\Pi)$ of characters $\chi \circ \det$ of $G$
such that $(\chi \circ \det) \otimes \Pi \cong \Pi$.
By \eqref{twist}, $X(\Pi)$ can be identified with the group 
$X(\psi)$ of characters $\chi$ of  $\cG_\Qp$ satisfying 
$\chi \otimes \psi \cong \psi$, and $X(\psi)$ is easily seen to be isomorphic to $S_\opsi$, the centralizer in $PGL_2(\ovE)$ of the image of $\opsi$. Thus,
the decomposition of $\Pi|_H$ can be described 
using $S_\opsi$, which leads to the proof of Theorem~\ref{MainIntr}.

In the last section of the paper we study the
connection between these results and the classical local Langlands correspondence.
Suppose $\psi: \cG_\Qp \to GL_2(E)$ is de Rham with distinct Hodge-Tate weights $a_\psi<b_\psi$. In this case, the $p$-adic Langlands correspondence $\psi \mapsto \Pi(\psi)$
encapsulates the classical Langlands correspondence. We investigate the related picture for $H=SL_2(\Qp)$.

More precisely, to $\psi$ is associated a Fontaine-Deligne module $D=D(\psi)$ and a filtration $Fil_\psi(D_L)$ on $D_L$
(see Section~\ref{sec:LocAlg} for details).
Here, $L$ is a finite Galois extension of
$\Qp$ such that $\psi$ becomes semistable when restricted to $\cG_L=\Gal(\overline{\Q}_p/L)$.
From $D$, we obtain the
corresponding Weil-Deligne representation
$\WD(\psi)$. 
Let  $\phi = \WD(\psi)^{F{\rm -s.s.}}$
 be the  $F$-semisimplification of $\WD(\psi)$, in the sense of \cite[8.5]{Deligne_fonctions_L}.
The space of locally algebraic vectors $\Pi^\lalg$ of $\Pi=\Pi(\psi)$ is nonzero and is
of the form $\Pi^\lalg=\Pi^\alg \otimes \s$,
where $\Pi^\alg$ is algebraic and 
$\s$ is smooth-admissible.
Then, $\s$ corresponds to $\phi$ 
under the local Langlands correspondence
(slightly modified from the classical correspondence---see Remark~\ref{rem:classic}).
Let $\cS_\ophi = S_\ophi/S_\ophi^\circ$ be the group of connected components of the centralizer $S_\ophi$ of the associated projective Weil-Deligne representation $\ophi$.
Let $\s|_H = \s_1 \oplus \ldots \oplus \s_{r(\phi)}$ be the decomposition of $\s|_H$ into irreducible representations of $H$. 
The representations $\s_1, \ldots, \s_{r(\phi)}$ are inequivalent, and they form an $L$-packet.
The members of the packet are parametrized by the
characters of $\cS_\ophi$.
Consequently, $|\cS_\ophi|=r(\phi)$.
We study the relation between 
 $|S_\opsi|$ and  $|\cS_\ophi|$, which also describes the relation between $\s|_H$ and $\Pi|_H$.
Notice that the same Weil-Deligne representation $\phi$ may correspond to different Galois representations $\psi$.
The full information about $\psi$ is encoded in the Fontaine-Deligne module $D$
together with the filtration $Fil_\psi(D_L)$.
Given a rank-one submodule $D^0_L$ of $D_L$, we denote by  $\Psi(D,D^0_L)$ the set of all pairs $(\psi, D(\psi) \stackrel{\sim}{\lra} D)$ of Galois representations $\psi$, together with an isomorphism $D(\psi) \stackrel{\sim}{\lra} D$ (of Deligne-Fontaine modules) such that $Fil^{-a_\psi}_\psi(D_L)$ is mapped to $D^0_L$ under (the map induced by) this isomorphism (after base change to $L$). It turns out that $|S_\opsi|$ depends only on $D$ and $D^0_L$, and it is same for all $\psi \in \Psi(D,D^0_L)$, regardless of the breaks in the filtration $Fil_\psi(D_L)$. In other words,
$|S_\opsi|$ does not depend on the Hodge-Tate weights $a_\psi$ and $b_\psi$.

The following theorem sumarizes 
the relation between 
 $|S_\opsi|$ and  $|\cS_\ophi|$.
More details can be found in Lemma~\ref{lem:triang}  and Proposition~\ref{S_opsi}.

\begin{theorem} Suppose $\psi: \cG_\Qp \to GL_2(E)$ is de Rham with distinct Hodge-Tate weights. Let  $\phi = \WD(\psi)^{F{\rm -s.s.}}$ and 
let $D$ be the Fontaine-Deligne module attached to $\psi$. 

\begin{enumerate}
    \item[(i)] If $\psi$ is trianguline, then 
    $|\cS_\ophi| \in \{1,2\}$ and
    $|S_\opsi| =|\cS_\ophi|$.

    \item[(ii)] Suppose $\psi$ is not trianguline
    and continues to be non-trianguline after base change to any finite extension of $E$. Then 
    $|\cS_\ophi| \in \{1, 2, 4\}$ and
    $|S_\opsi| \leq |\cS_\ophi|$.
    More specifically, if $|\cS_\ophi|=2$ then there are precisely four rank-one submodules $D^0_L$ of $D_L$ such that $|S_{\overline{\psi'}}| = 2$ for all $\psi' \in \Psi(D,D^0_L)$. If $|\cS_\ophi|=4$ then there are precisely six rank-one submodules $D^0_L$ of $D_L$ such that $|S_{\overline{\psi'}}| = 2$ for all $\psi' \in \Psi(D,D^0_L)$. In all other cases, $|S_{\opsi}| = 1$.
\qed

\end{enumerate}

\end{theorem}

The relation between $\Pi|_H$
and $\s|_H$ can now be easily described; this is done in 
corollaries \ref{cor:trian} and \ref{cor:nontrian}.

\subsection{Acknowledgements} 

We would like to thank Jeffrey Adams for some valuable remarks regarding $L$-packets. Furthermore, we are very grateful to Gabriel Dospinescu for answering a number of questions we had regarding his joint work  \cite{ColDospNiz} with P. Colmez and W. Nizio\l, and for several helpful comments. The work on this project was supported in part by 
D.B.'s  Collaboration Grant for Mathematicians and we thank the Simons Foundation for their support.

\subsection{Notation}

Let $\Q_p \subseteq F \subseteq E$ be a sequence of finite extensions, with the rings of integers $\Z_p \subseteq \cO_F \subseteq \cO_E$. We fix a uniformizer $\vpi_F$ of 
$F$ and denote by $v_F: F^\x \to \Z$ the normalized valuation (i.e., 
$v_F(\vpi_F) = 1$). Let $q=p^f$ be the cardinality of the residue field of 
$F$, and denote by $|\cdot| = |\cdot|_F$ the absolute value specified by $|x| = q^{-v_F(x)}$.  
%From Section~\ref{Res} on we assume that $E$ contains a square root of $q$.

All representations on topological vector spaces are tacitly assumed to be continuous. We call such a representation {\it irreducible} if it is topologically irreducible, i.e., if it is non-zero and the zero subspace is the only proper closed subspace stable under the group action. A Banach space representation is called {\it unitary} if the group action is norm-preserving. If $V$ is an $E$-Banach space representation of $G$, we denote by $V^\an$ the subspace of locally analytic vectors of $V$. Similarly, if $V$ is a Banach representation, or a locally analytic representation, $V^\sm$ is the subspace of smooth vectors and $V^\lalg$ is the subspace of locally algebraic vectors of $V$.

If $V$ is a vector space over a subfield $E' \sub E$, then we write $V_E$ for the base change $V \otimes_{E'} E$. 

Given a parabolic subgroup $P$ of $G$, with Levi decomposition $P=MU$, we will use several types of parabolic induction: 
%$i_{G.M}(\cdot)$ denotes the smooth normalized induction, 
$\ind^G_P(\cdot)$ denotes the smooth induction, $\Ind^G_{P}(\cdot)^\an$ the locally analytic  induction, and $\Ind^G_{P}(\cdot)^\cont$
the continuous induction (all non-normalized).

The absolute Galois group of $\Qp$ will be denoted by $\cG_\Qp$.

\section{Restrictions of representations of \texorpdfstring{$p$}{}-adic Lie groups}\label{Sec2}

\subsection{Restrictions of admissible Banach space representations}

Let $G$ be a $p$-adic Lie group \cite[sec. 13]{Schneider_Lie}, and let $K \sub G$ be an open compact subgroup. Define
\[
   \cO_E[[K]] = \varprojlim_N \cO_E[K/N] \quad \text{ and }
   \quad  E[[K]]= E \otimes_{\cO_E} \cO_E[[K]] \;,
\]
where $N$ runs through all open normal subgroups of $K$. These are both noetherian rings, cf. \cite[beginning sec. 3]{ST-Banach}. An $E$-Banach space representation $\Pi$
of $K$ is called {\it admissible} if the continuous dual $\Pi' = \Hom^\cont_E(\Pi,E)$ is a finitely generated $E[[K]]$-module, cf. \cite[3.4]{ST-Banach}. In the duality theory of \cite{ST-Banach} (building on Schikhof's duality \cite{Schikhof_duality}) it is important that $\Pi'$ is equipped with the bounded-weak topology. 
This is by definition the finest locally convex topology on $\Pi'$ that coincides 
with the weak topology on (norm) bounded 
subsets of $\Pi'$.

An $E$-Banach space representation of $G$ is called admissible if it is admissible as a representation of every compact open subgroup $K$ of $G$. Admissibility can be tested on a single compact open subgroup. 

If $\Pi$ is an admissible $E$-Banach space representation of $G$, then the map $\Pi_0 \mapsto \Pi_0'$ is a bijection between the set of $G$-invariant closed vector subspaces $\Pi_0 \sub \Pi$ and the set of $G$-stable $E[[K]]$-quotient modules of $\Pi'$, cf. \cite[3.5]{ST-Banach}.

\begin{proposition}\label{restrictionPi}
Let $G$ be a $p$-adic Lie group and $H$ an open normal subgroup of $G$ of finite index. Let $\Pi$ be an irreducible admissible
$E$-Banach space representation of $G$. 

\vskip8pt 

(i) $\Pi|_H$ contains closed $H$-stable subspaces which are irreducible representations of $H$. Any such subrepresentation is automatically admissible as a representation of $H$.

\vskip8pt

(ii) Let $\pi_1 \sub \Pi$ be a closed $H$-stable subspace which is irreducible as a representation of $H$. Then there are closed $H$-stable subspaces $\pi_2, \ldots, \pi_r$ of $\Pi$ such that each $\pi_i$ is an irreducible admissible representation of $H$, and the canonical map $\pi_1 \oplus \cdots \oplus \pi_r \to \Pi$ is a topological isomorphism, i.e., we have an isomorphism
\[\Pi|_H = \pi_1 \oplus \cdots \oplus \pi_r\] 
of $E$-Banach space representations of $H$. For each $i \in \{1, \ldots, r\}$ and each $g \in G$ one has $g.\pi_i  = \pi_j$ for some $j \in \{1, \ldots,r\}$, and $G$ acts transitively on the set $\{\pi_1, \ldots, \pi_r\}$. 

\vskip8pt

(iii) Every closed irreducible $H$-subrepresentation of $\Pi$ has a complement. \end{proposition}

\begin{proof} Choose a compact open subgroup $K \sub H$. As $\Pi$ is assumed to be irreducible, $\Pi \neq 0$, and hence $\Pi' \neq 0$, by \cite[3.5]{ST-Banach}. Therefore, $\Pi'$ contains proper $H$-stable $E[[K]]$-submodules (e.g., the zero submodule). Let $\cM$ be the set of proper $E[[K]]$-submodules $M \sub \Pi'$ which are $H$-stable. $\cM$ is not empty, by what we have just observed. As $\Pi$ is assumed to be admissible, the continuous dual space $\Pi'$ is a finitely generated $E[[K]]$-module. Given any ascending  chain $M_0 \sub M_1 \sub \ldots$ in $\cM$ its union is again a proper $H$-stable submodule, and thus an upper bound in $\cM$. Zorn's Lemma implies that $\Pi'$ contains a maximal $H$-invariant $E[[K]]$-submodule $M$.
The quotient $\Pi'/M$ then corresponds to a topologically irreducible closed
$H$-subrepresentation $\pi$ of $\Pi|_H$, namely $\pi = M^\perp := \{v \in \Pi \midc \forall \ell \in M: \ell(v) = 0\}$. Because $\Pi'/M$ is finitely generated as $E[[K]]$-module, $\pi$ is an admissible
representation of $H$.

For each $g \in G$, $g.\pi$ is an $H$-invariant closed vector subspace of $\Pi$.
It is irreducible as $H$-representation because if $\pi_0 \subsetneq g.\pi$ is a proper $H$-invariant closed subspace, then $g^{-1}.\pi_0 \sub \pi$ is a subspace of the same type, which must hence be zero.
Let $\{g_1, g_2, \dots, g_k\}$ be a set of coset representatives of $G/H$.
Then
\[
    \sum_{i=1}^k (g_i).\pi
\]
is a $G$-invariant subspace of $\Pi$. It is closed because it corresponds to the $G$-stable $E[[K]]$-submodule $\bigcap_{i=1}^k (g_i).M$. Hence, it is equal to $\Pi$, because $\Pi$
is irreducible. We select a minimal subset $\{g_{i_1}, g_{i_2}, \dots, g_{i_r}\}$
of $\{g_1, g_2, \dots, g_k\}$ such that 
$
	\sum_{j=1}^r (g_{i_j}).\pi = \Pi \;.
$
We claim that this sum is direct. Suppose that 
\begin{numequation}\label{intersection}
   (g_{i_l}).\pi \cap \sum_{j \ne l} (g_{i_j}).\pi \neq 0 \;.
\end{numequation}
Then, as $\sum_{j \ne l} (g_{i_j}).\pi$ is closed (by the same argument as above) and $H$-stable, and because $(g_{i_l}).\pi$ is topologically irreducible, the left-hand side of \ref{intersection} must be $(g_{i_l}).\pi$. This implies $\sum_{j \ne l} (g_{i_j}).\pi = \Pi$,
contradicting the minimality. The map $\bigoplus_{j=1}^r (g_{i_j}).\pi \to \Pi$ is thus a continuous bijection, and hence a topological isomorphism \cite[8.7]{NFA}.

Assertion (iii) is an immediate consequence of assertion (ii). \end{proof}

\begin{corollary}\label{CorRes}
Let $\Pi$ be an irreducible admissible $E$-Banach space representation of the group $GL_n(F)$. 
Assume that $\Pi$ has a central character.\footnote{This is the case when $\Pi$ is absolutely irreducible, by \cite[1.1]{DospinescuSchraen}.}
Then there are closed $H$-stable subspaces $\pi_1, \ldots, \pi_r$ of $\Pi$, each of which is an irreducible admissible $E$-Banach representation of $SL_n(F)$, and such that the canonical map $\pi_1 \oplus \cdots \oplus \pi_r \to \Pi$ is an isomorphism of topological vector spaces. The group $GL_n(F)$ acts transitively on the set $\{\pi_1, \ldots, \pi_r\}$ as in \ref{restrictionPi}.
\end{corollary}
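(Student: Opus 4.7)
The plan is to reduce to Proposition \ref{restrictionPi} by inserting an intermediate subgroup. The obstacle is that $SL_n(F)$ is not of finite index in $G = GL_n(F)$ (the determinant gives $G/SL_n(F) \cong F^\times$), so the proposition cannot be applied directly. To fix this, set $Z := Z(G) = F^\times \cdot I_n$ and consider $H' := SL_n(F) \cdot Z$, which is normal in $G$. The determinant identifies $G/H'$ with $F^\times/(F^\times)^n$, and for a $p$-adic field $F$ this quotient is finite; moreover $H'$ is closed in $G$, and a closed subgroup of finite index in a topological group is open. Thus $H'$ is an open normal subgroup of $G$ of finite index, and Proposition \ref{restrictionPi} applied to the pair $(G,H')$ yields a decomposition
\[
   \Pi|_{H'} \;=\; \Pi_1 \oplus \ldots \oplus \Pi_r
\]
into irreducible admissible $H'$-representations on which $G$ acts transitively.

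Next I would upgrade this to the desired decomposition as an $SL_n(F)$-representation. The central character hypothesis is what lets this work: any closed $SL_n(F)$-stable subspace $\Pi_0 \sub \Pi_i$ is automatically $Z$-stable, because $Z$ acts on $\Pi_i$ by scalars (the central character of $\Pi$), and hence $\Pi_0$ is stable under $H' = SL_n(F) \cdot Z$. Irreducibility of $\Pi_i$ as an $H'$-representation then forces $\Pi_0 \in \{0, \Pi_i\}$, so each $\Pi_i$ is topologically irreducible as an $SL_n(F)$-representation. Admissibility is preserved without further work: any compact open subgroup $K \sub SL_n(F)$ is also compact open in $H'$, and $\Pi_i'$ is the same finitely generated $E[[K]]$-module in either picture.

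Finally, the topological direct sum decomposition $\bigoplus_{i=1}^r \Pi_i \to \Pi$ is a statement about closed vector subspaces of $\Pi$, so it is unaffected by changing the acting group from $H'$ to $SL_n(F)$. The transitivity of the $G$-action on the set $\{\Pi_1,\ldots,\Pi_r\}$ is inherited directly from \ref{restrictionPi}, since $G$ permutes the same subspaces regardless of whether we view them as $H'$-stable or $SL_n(F)$-stable. The only genuinely non-formal input is the finiteness of $F^\times/(F^\times)^n$, which is standard for local fields, so no serious obstacle appears; the central character hypothesis is used precisely once, in the step promoting $H'$-irreducibility to $SL_n(F)$-irreducibility.
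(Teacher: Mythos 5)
Your proposal is correct and follows essentially the same route as the paper: pass to the intermediate group $ZSL_n(F)$, which is open, normal, and of finite index in $GL_n(F)$, and invoke Proposition \ref{restrictionPi}. The paper states the reduction in one line; you supply the detail it leaves implicit, namely that the central character hypothesis makes every closed $SL_n(F)$-stable subspace automatically $ZSL_n(F)$-stable, which is precisely why the $ZSL_n(F)$-irreducibility and admissibility of the summands transfer to $SL_n(F)$.
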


\begin{proof} Let $Z$ be the center of $GL_n(F)$ and $H = ZSL_n(F)$. Then $H$ is an open normal subgroup of $GL_n(F)$ of finite index. The statement now follows from \ref{restrictionPi}.
\end{proof}

\begin{proposition}\label{restrictionPialg}
Let $\Pi$ be an irreducible admissible $E$-Banach space representation of $GL_n(F)$ which has a central character. Suppose that the subspace of locally algebraic vectors $\Pi^\lalg$ is dense in $\Pi$. Write $\Pi = \pi_1 \oplus \cdots \oplus \pi_r$ as in \ref{CorRes}. Then for each $i$, the subspace $(\pi_i)^\lalg$ is dense in $\pi_i$, hence non-zero, and 
\[
    (\Pi^\lalg)|_H = (\pi_1)^\lalg \oplus \cdots \oplus (\pi_r)^\lalg \;.
\]
\end{proposition}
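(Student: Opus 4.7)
The plan is to derive the result from two observations. First, a vector in $\Pi$ is locally algebraic as a representation of $GL_n(F)$ if and only if it is locally algebraic as a representation of $H = ZSL_n(F)$ (with $H$ as in the proof of \ref{CorRes}); second, the continuous $H$-equivariant projections attached to the decomposition $\Pi = \Pi_1 \oplus \ldots \oplus \Pi_r$ preserve local algebraicity. The first point rests on the fact that the product map of algebraic groups $Z \times SL_n \to GL_n$ is surjective with finite (central) kernel, so that $Z \cdot SL_n = GL_n$ as algebraic groups over $F$, and $H$ is an open subgroup of finite index in $GL_n(F)$. For any compact open subgroup $K \sub H$, both notions of local algebraicity at a vector $v$ amount to the same condition, namely that $K \cdot v$ spans a finite-dimensional subspace on which $K$ acts through an algebraic representation of the algebraic group $GL_n$. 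Consequently $\Pi^\lalg = (\Pi|_H)^\lalg$ as subsets of $\Pi$.

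Next, I would use the projections $p_i : \Pi \to \Pi_i$, which are continuous (the decomposition of \ref{restrictionPi} is a topological direct sum) and $H$-equivariant. They send $H$-locally algebraic vectors to $H$-locally algebraic vectors, so $p_i(\Pi^\lalg) \sub (\Pi_i)^\lalg$ for every $i$. Conversely, any $v_i \in (\Pi_i)^\lalg$, viewed inside $\Pi$, is locally $H$-algebraic and hence (by the first observation) locally $G$-algebraic, so $(\Pi_i)^\lalg \sub \Pi^\lalg \cap \Pi_i$. Combining these inclusions with the ambient direct sum $\Pi = \bigoplus_i \Pi_i$ yields
\[
   \Pi^\lalg = (\Pi_1)^\lalg \oplus \ldots \oplus (\Pi_r)^\lalg,
\]
which is the required identification of $(\Pi^\lalg)|_H$.

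The density assertion then follows formally. Since $p_i$ is continuous and surjective, and $\Pi^\lalg$ is dense in $\Pi$ by hypothesis, the image $p_i(\Pi^\lalg) = (\Pi_i)^\lalg$ is dense in $p_i(\Pi) = \Pi_i$; in particular $(\Pi_i)^\lalg \neq 0$.

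The only step that genuinely requires care is the identification of the two notions of ``locally algebraic'' in the first observation, i.e., checking that no new or different algebraic structures appear upon passing from $GL_n(F)$ to $H$; once that is in place, everything else is formal bookkeeping with continuous $H$-equivariant projections on a topological direct sum.
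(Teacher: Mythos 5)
Your proposal works entirely with $H = ZSL_n(F)$ (the auxiliary group from the proof of \ref{CorRes}), and implicitly reads $(\Pi_i)^\lalg$ as the set of $ZSL_n(F)$-locally algebraic vectors of $\Pi_i$, i.e., as $\Pi_i \cap \Pi^\lalg$. That is not what the statement asserts. In \ref{CorRes} each $\Pi_i$ is an $SL_n(F)$-representation, and the $H$ in the conclusion of \ref{restrictionPialg} is $SL_n(F)$, so $(\Pi_i)^\lalg$ is the set of \emph{$SL_n(F)$}-locally algebraic vectors of $\Pi_i$. This is a priori a strictly larger set than the one your argument handles: $SL_n(F)$ is \emph{not} open in $ZSL_n(F)$, so a vector that is locally algebraic for $SL_n(F)$ need not be locally algebraic for $ZSL_n(F)$. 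Your two observations establish
\[
\Pi^\lalg \;=\; \bigoplus_{i}\bigl(\Pi_i \cap \Pi^\lalg\bigr),
\]
but they do not show that $\Pi_i \cap \Pi^\lalg$ exhausts the $SL_n(F)$-locally algebraic vectors of $\Pi_i$, which is precisely the content of the inclusion ``$\supseteq$'' in the proposition.

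The missing step is where the hypothesis $\Pi^\lalg \ne 0$ (which follows from density) is actually used. Since $\Pi^\lalg \ne 0$, the central character of $\Pi$ must itself be locally algebraic; once that is known, any $SL_n(F)$-locally algebraic vector $v_i \in \Pi_i$ becomes $ZSL_n(F)$-locally algebraic (the $Z$-action on $v_i$ is through the locally algebraic central character), and then your finite-index argument applies to promote it to a $GL_n(F)$-locally algebraic vector. This is exactly the paper's argument for ``$\supseteq$''. Your observation about $ZSL_n(F)$ being open of finite index with the same Zariski closure $GL_n$ is correct and suffices for the ``$\subseteq$'' direction and the density assertion, but without the central-character step the proof as written does not yield the stated identity with $(\Pi_i)^\lalg$ in the sense the proposition uses. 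Once you add the remark that $\Pi^\lalg \ne 0$ forces the central character to be locally algebraic, your argument closes up and follows the paper's route quite closely.
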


\begin{proof} Consider $v \in V^\lalg$ and write it as
\[
     v = v_1 + \ldots + v_k \; \mbox{ with }v_i \in \Pi_i \;.
\]
Because the projection map $\Pi \to \pi_i$, $i=1,\ldots,r$, is continuous and $H$-equivariant, it follows that  $v_i \in (\pi_i)^\lalg$, for all $i$. This shows the inclusion ''$\subseteq$''. Because we assume $\Pi^\lalg$ to be non-zero, the central character must be locally algebraic. But then any vector in $\pi_i^\lalg$ is locally algebraic for $ZH$ (cf. proof of \ref{CorRes}), and thus locally algebraic for $G$, which shows ''$\supseteq$''.
\end{proof}

The next result shows that, in the general setting of \ref{restrictionPi}, every irreducible admissible Banach space representation of $H$ is an irreducible closed subspace and direct summand of the restriction of an irreducible admissible Banach space representation of $G$. 

\begin{proposition}\label{prop:induced}
Let $G$ be a $p$-adic Lie group and $H$ an open normal subgroup
of $G$ of finite index. Let $(\pi,V)$ be an irreducible admissible Banach space representation of $H$. Then there exists an irreducible admissible Banach space representation 
$\Pi$ of $G$ such that $\Pi|_H$ contains a constituent
isomorphic to $\pi$. If $\pi$ is unitary, then $\Pi$ can be chosen to be unitary as well. 
\end{proposition}

\begin{proof}
Let $\Xi = \ind_H^G(\pi)$. This is the representation of $G$
on the space
\[
   W = \{ f:G \to V \mid \forall h \in H,
   g \in G: f(hg) = \pi(h)f(g)   \}
\]
given by $\Xi(g)f(x)=f(xg)$.

We fix a set of representatives
$
\{ g_1, g_2, \dots, g_n\}
$
of $G/H$, and define a norm on $W$ by setting
\[
    \|f\| = \max_i \|f(g_i)\|
\]
for $f \in W$. Then $W$ is a Banach space with continuous $G$-action. For every $g \in G$ and $i \in \{1, \ldots, n\}$ there is $j(i,g) \in \{1, \ldots,n\}$ and $h_{i,g} \in H$ such that $g_ig = h_{i,g}g_{j(i,g)}$, and the map $i \mapsto j(i,g)$ is a permutation of $\{1, \ldots,n\}$. Now, if $\pi$ is unitary, we have
\[
\|g.f\| = \max_i \|f(g_ig)\| = \max_i \left\|f\Big(h_{i,g}g_{j(i,g)}\Big)\right\| = \max_i \|f(g_{j(i,g)})\| = \|f\| \;.\] 
This shows that $\Xi$ is unitary if $\pi$ is unitary. For $i=1, \dots, n$, set
\[
    W_i = \{f \in W \mid \supp(f) \subseteq g_i H \}.
\]
Then each $W_i$ is $H$-invariant and 
$W|_H=W_1 \oplus \cdots \oplus W_n$. 
More specifically, define $\varphi_i : W_i \to V$
by $\varphi_i(f)=f(g_i)$. Then $\varphi_i$ is a linear bijection.
It is continuous because
\[
    \|f\|=\|f(g_i)\| \quad \text{for } \, f \in W_i \;.
\]
Moreover,
\[
    \varphi_i(\pi(h)f) 
    = f(g_ih) = \pi(g_ih g_i^{-1})f(g_i)
    = g_i^{-1} \pi (h)\varphi_i(f)
\]
for all $f \in W_i$ and $h \in H$. Hence, $W_i$ is isomorphic 
to $g_i^{-1} \pi$, so
\[
    \Xi|_H = \bigoplus_{i=1}^n g_i^{-1} \pi \;.
\]
Since each constituent is admissible, we see that $\Xi|_H$
is also admissible. 
Then  $\Xi$ is admissible as well. Hence, it must contain an irreducible
subrepresentation $\Pi$. Let $(\pi_0,W_0)$ be an
irreducible constituent of $\Pi|_H$.
Since $W_0 \subset W = W_1 \oplus \cdots \oplus W_n$, there
exists an index $j$ such that the projection of $W_0$
to $W_j$ is not zero. By irreducibility, $W_0 \cong W_j$.

In conclusion, $\Pi|_H$ contains a representation
isomorphic to $g_j^{-1}\pi$, and this is a direct summand, by \ref{restrictionPi}. Then $(g_j\Pi)|_H$ contains a constituent isomorphic to $\pi$.
\end{proof}

\subsection{Restrictions of absolutely irreducible representations}

Let $G$ be a $p$-adic Lie group and $H$ an open normal subgroup of $G$ such that $C=G/H$ is finite abelian.
Denote by $X_H$ the group of characters of $G$ trivial on $H$. Then we can identify $X_H$
with $\hat{C}$, the group of characters of $C$.

In this section, we need the assumption that every irreducible character of $C$ is defined over $E$.
This is satisfied if $E$
 contains the $m$-th roots of unity, where $m$ is the
exponent of $C$, i.e., the least common multiple of the orders of all
elements of $C$. 

Let $\Pi$ be an irreducible admissible
$E$-Banach space representation of $G$. 
For the proof of Proposition~\ref{restrictionPiAbs} below,
we need Schur's lemma, which holds if and only if $\Pi$ is absolutely irreducible (see \cite[Theorem 1.1]{DospinescuSchraen}).

\begin{lemma}\label{lemma:decomp}
Let $G$ be a $p$-adic Lie group and $H$ an open normal subgroup of $G$ such that $C=G/H$ is finite abelian.
Assume that every irreducible character of
$C$ is defined over $E$.

\begin{enumerate}
    \item[(i)] Let $\pi$ be an  irreducible admissible $E$-Banach space representation of $H$. Then
    \[
    \res^G_H \circ \ind^G_H (\pi) = \bigoplus_{g \in G/H} g\pi.
    \]
    \item[(ii)] Let $\Pi$ be an irreducible admissible $E$-Banach space representation of $G$. Then
    \[
    \ind^G_H \circ \, \res^G_H (\Pi) = \bigoplus_{\chi \in X_H} \chi \otimes \Pi.
    \]

\end{enumerate}

\end{lemma}

\begin{proof} (i) Follows from proof of Proposition~\ref{prop:induced}.

(ii) We will prove
\[
    \ind^G_H \circ \, \res^G_H (\Pi) \cong (\ind^G_H 1) \otimes \Pi.
\]
Set
\[
  \begin{aligned}
  U &= \{ f: G \to E \mid f(hg)=f(g) \, \text{ for all } h \in H, g \in G \} \\
  W &= \{ F: G \to V \mid F(hg)=\Pi(h)F(g) \, \text{ for all } h \in H, g \in G \}.
  \end{aligned}
\]
Then $U$ is the space of $\ind^G_H 1$ and $W$ is the space of $\ind^G_H \circ \res^G_H (\Pi)$.
Define $\psi: U \otimes V \to W $ by
\[
    \psi(f \otimes v)(x)=f(x) \Pi(x) v,
\]
for $f \in U$, $v \in V$, and $x \in G$.
It is easy to show that $\psi(f \otimes v)$ is 
indeed an element of $W$ and that $\psi$ is an intertwining operator.
To prove that $\psi$ is bijective, we fix a set 
of representatives $\{ g_1, \dots, g_n \}$ of $G/H$.  Notice that any $f \in U$ and any $F \in W$ are completely determined by their values at $g_i$'s.
For $j \in \{1, \dots, n\}$,
define $f_j \in U$ by $f_j(g_i)=\delta_{ij}$.
Then $f_1, \dots, f_n$ is a basis of $U$ and any 
$y \in U \otimes V$ can be written in a unique way as
$y =f_1 \otimes v_1 + \cdots + f_n \otimes v_n$,
where $v_i \in V$.

Now, suppose $y \in \ker \psi$, and write 
$y =f_1 \otimes v_1 + \cdots + f_n \otimes v_n$.
Then for any $x \in G$, 
\[
   f_1(x)\Pi(x)v_1 + \cdots + f_n(x)\Pi(x)v_n=0.
\]
In particular, for  $x=g_i$,  we get $\Pi(g_i)v_i=0$, and hence $v_i=0$. 
This holds for any $i \in \{1, \dots, n\}$. It follows $y=0$, proving that $\psi$ is injective.

For surjectivity, take $F \in W$. Let 
$y = \sum_{i=1}^n f_i \otimes \Pi(g_i^{-1}) F(g_i) \in U$.
Then $\psi(y)(g_i) = F(g_i)$, for all $i$, and hence $F = \psi(y)$.

This proves 
$
    \ind^G_H \circ \, \res^G_H (\Pi) \cong (\ind^G_H 1) \otimes \Pi.
$
To conclude the proof, we observe that we can identify $\ind^G_H 1$ with $\ind^C_1 1$,
which is the right regular representation of $C$.
Since $C$ is abelian and every irreducible character of
$C$ is defined over $E$, $\ind^C_1 1$ decomposes as a direct sum of characters of $C$. 
\end{proof}

\begin{proposition}\label{restrictionPiAbs}

Let $G$ be a $p$-adic Lie group and $H$ an open normal subgroup of $G$ such that $C=G/H$ is finite abelian. Assume that  every irreducible character of
$C$ is defined over $E$.
Let $\Pi$ be an absolutely irreducible admissible
$E$-Banach space representation of $G$
and let $\pi$ be an irreducible subrepresentation
of $\res^G_H (\Pi)$. Assume that $\pi$ is also absolutely irreducible.

\begin{enumerate}
    \item[(i)] The multiplicity $m$ of $\pi$ in 
    $\res^G_H (\Pi)$ is equal to the multiplicity 
    of $\Pi$ in $\ind^G_H(\pi)$. 
    
    \item[(ii)] Define
    $
       G_{\pi} = \{ g \in G \mid g \pi \cong \pi \}.
    $  
    Then
\[
    \res^G_H(\Pi) \cong  m \bigoplus_{g \in G/G_\pi} g\pi.
\]

    \item[(iii)] Define $X_H(\Pi)= \{ \chi \in X_H \mid \chi \otimes \Pi \cong \Pi\}$. Then
    \[
    \ind^G_H(\pi) \cong m \bigoplus_{\chi \in X_H/X_H(\Pi)} \chi \otimes \Pi.
    \]

    \item[(iv)] We have $|X_H(\Pi)|=m^2|G:G_\pi|$ and $m^2|X_H:X_H(\Pi)|=|G_\pi:H|$.   
    
\end{enumerate}

\begin{proof} We can identify $X_H(\Pi)$ with  $\hat{C}(\Pi)= \{ \chi \in \hat{C} \mid \chi \otimes \Pi \cong \Pi\}$.

(i) It follows from Lemma~\ref{lemma:decomp}(ii) that 
$\ind^G_H(\pi)$ decomposes as a direct sum of representations $\chi \otimes \Pi$,
for $\chi$ in some subset of $\hat{C}$.
In addition, it can be shown using the standard arguments from representation theory that the Frobenius reciprocity holds in this context. Then,
\[
  \Hom_G^{cont}(\Pi, \ind^G_H(\pi) ) \cong \Hom^{cont}_H(\res^G_H(\Pi), \pi).
\]
Since the representations under consideration decompose as direct sums of absolutely irreducible subrepresentations, the multiplicity of $\Pi$ 
in $\ind^G_H(\pi)$ is equal to
\[
  \dim \Hom^{cont}_G(\Pi, \ind^G_H(\pi) ) = \dim \Hom^{cont}_H(\res^G_H(\Pi), \pi),
\]
and this is equal to the multiplicity of $\pi$ in 
$\res^G_H (\Pi)$.

(ii) We know that  $\res^G_H(\Pi)$ is a direct sum of 
$c\pi$, for $c$ in some subset of $C$.
By grouping equivalent components, we get 
    \[
        \res^G_H(\Pi) \cong \bigoplus_{c \in D } m_c c \pi,
    \]
for some subset $D$ of $C$, where $m_c$ is the multiplicity of $c \pi$ in $\res^G_H(\Pi).$
The action of $G$ permutes the isotypic components $m_c c\pi$. This action is transitive  because $\Pi$ is irreducible.
Since $G$ acts by conjugation, it is easy to see that   
the multiplicities $m_c$ are all equal.
This proves (ii).

(iv) The multiplicity of $\Pi$ in 
$\bigoplus_{\chi \in \hat C} \chi \otimes \Pi$ is
$|\hat{C}(\Pi)|$. On the other hand, it is equal to
\[
    \begin{aligned}
       \dim \Hom_G^{cont}(\Pi, \bigoplus_{\chi \in \hat C} \chi \otimes \Pi) 
       &=  \dim \Hom_G^{cont}(\Pi,  \ind^G_H \circ \res^G_H (\Pi)) \\
        &=  \dim \Hom_H^{cont} (\res^G_H (\Pi), \res^G_H (\Pi)) = m^2 |G:G_\pi|.
    \end{aligned}
\]
It follows $|\hat{C}(\Pi)|=m^2|G:G_\pi|$.
We can write
\begin{equation}\label{Eq:sum}
       \bigoplus_{\chi \in \hat C} \chi \otimes \Pi =
   m^2 |G:G_\pi| \bigoplus_{\chi \in \hat C/ \hat C(\Pi)} \chi \otimes \Pi.
\end{equation}
It follows $|G:H|=  m^2 |G:G_\pi| \cdot |\hat C: \hat C(\Pi)|$, so $|G_\pi:H| = m^2|\hat{C}:\hat{C}(\Pi)|$, proving (iv).

(iii)
 From (ii), we know that 
$\res^G_H (\Pi)$ can be written as a sum of 
$m |G:G_\pi|$ components $g\pi$
(counting multiplicities).
Notice that $\ind^G_H(g\pi) \cong \ind^G_H(\pi)$, for any $g \in G$.
Equation \eqref{Eq:sum} then implies
\[
    \ind^G_H(\pi) \cong m \bigoplus_{\chi \in \hat C/ \hat C(\Pi)} \chi \otimes \Pi,
\]
completing the proof.
\end{proof}

\end{proposition}

\begin{corollary}\label{cor:two}

With assumptions and notation as in Proposition~\ref{restrictionPiAbs}, assume in addition that for some prime $\ell$, $|X_H(\Pi)|= \ell^2$
and that $\Pi|_H$ has at most $\ell$ components
(counting multiplicities). Then  $\Pi|_H=\ell \pi$.

\end{corollary}

\begin{proof}  

By Proposition~\ref{restrictionPiAbs} (iv), we have $|X_H(\Pi)| = m^2 |G:G_\pi|$. Since 
$|X_H(\Pi)|=\ell^2$, it follows $m=\ell$ or
$m=1$.
 If $m = \ell$, we are done.

Assume $m=1$. Then the number of components of
$\Pi|_H$ is $|G:G_\pi|$, and by assumption,
$|G:G_\pi| \leq \ell$.
On the other hand, 
$|G:G_\pi| = |X_H(\Pi)| = \ell^2$, a contradiction.
\end{proof}

\subsection{Restrictions of continuous principal series of \texorpdfstring{$GL_n(F)$}{}}

We need the following \cite[Corollary 1.2]{BanHun}:

\begin{proposition}\label{PropHomInd}
Let $\bG$ be a split connected reductive $\Z$-group and $\bP$ a minimal parabolic subgroup
of $\bG$. Let $G_0=\bG(\cO_F)$ and $P_0=\bP(\cO_F)$.
For any two continuous characters $\chi_1$ and $\chi_2$ of $P_0$, we have
\[
    \Hom_{G_0}^\cont (\Ind_{P_0}^{G_0}(\chi_1), \Ind_{P_0}^{G_0}(\chi_2)) = 
   \begin{cases}
      0   & \text{if } \, \chi_1 \ne \chi_2, \\
   E \cdot \rm{id} & \text{if } \, \chi_1 = \chi_2.
   \end{cases}
\]

\end{proposition}

\begin{proposition}\label{prop:irred}

Let $G=GL_n(F)$ and $H=SL_n(F)$.
Let $\chi$ be a continuous character of the minimal parabolic subgroup $P$ of $G$. 
We denote by $\Ind^G_{P}(\chi)^\cont$
the continuous principal series representation.
Then $\Ind^H_{P\cap H}(\chi|_H)^\cont$ is 
reducible if and only if 
$\Ind^G_{P}(\chi)^\cont$ is reducible.

\end{proposition}

\begin{proof}

Set $\Pi = \Ind^G_{P}(\chi)^\cont$. Notice that $G/P \cong H/(H \cap P)$ and so the spaces $\Ind^G_{P}(\chi)^\cont$ and $\Ind^H_{P\cap H}(\chi|_H)^\cont$ are equal. Then as $H$-representations
\[
      \Pi|_H = \Ind^H_{P\cap H}(\chi|_H)^\cont.
\]
If $\Pi$ is reducible, then $\Pi|_H$ is clearly 
reducible.

Assume $\Pi$ is irreducible. If $\Pi|_H$ were reducible,
we would have $\dim \Hom_H(\Pi|_H, \Pi|_H) >1$
because $\Pi|_H$ decomposes as a direct sum of irreducible representations.
This is impossible by Proposition~\ref{PropHomInd}.
\end{proof}

\section{Representations of \texorpdfstring{$SL_2(\Qp)$}{}}

From now on, $G=GL_2(\Qp)$ and $H=SL_2(\Qp)$.

\subsection{Ordinary representations}

Let $T \subset G$ be the torus of diagonal matrices and $T_H=T \cap H$. Then $T\cap H$ is isomorphic to $\Q_p^\times$ via the map
$a \mapsto \diag(a, a^{-1})$.
Fix a minimal parabolic subgroup of $P$ of $G$ containing $T$. Then $P=TU$, where $U$ is the unipotent radical of $U$, and $P \cap H = T_H U$.
Given a continuous character $\chi$ of $T$ (respectively, $T_H$), we extend it trivially to $U$ to get a continuous character of $P$ (respectively, $P \cap H$), which we denote by the 
same letter $\chi$.

Recall that the smooth Steinberg representation $\St$ is irreducible as both a $G$-representation and an $H$-representation. It sits in the exact sequence
\[
   0 \to 1 \to \ind^G_P(1) \to \St \to 0.
\]
Denote by $\widehat{\St}$ the universal unitary completion of $\St$. Then $\widehat{\St}$ is  topologically irreducible and admissible as a representation of $G$ and it sits in the exact sequence
\begin{numequation}\label{contSteinberg}
0 \to 1 \to \Ind^G_{P}(1)^\cont \to \widehat{\St} \to 0 \;,
\end{numequation}
\cite[Lemma 5.3.3]{EmertonLocalGlobal}.

\begin{proposition}\label{prop:ordinary}

Ordinary representations of $H=SL_2(\Qp)$ are
$1$, $\widehat{\St}$, and $\Ind^H_{P\cap H}(\chi)^\cont$, where $\chi$ runs over all nontrivial unitary characters of $\Q_p^\times.$
All these representations are pairwise inequivalent.

\end{proposition}

\begin{proof}  

Ordinary representations of $H$ are the irreducible components of 
$\Ind^H_{P\cap H}(\chi)^\cont$, where $\chi$ runs over all  unitary characters of $\Q_p^\times.$
From \cite[Proposition 5.3.4]{EmertonLocalGlobal},
we know that for $\chi \ne 1$, the representation 
$\Ind^G_P(\chi \otimes \chi^{-1})^\cont$ is topologically irreducible.
Proposition~\ref{prop:irred} then implies that
$\Ind^H_{P\cap H}(\chi)^\cont$ is also irreducible.
All these representations are pairwise inequivalent by Proposition~\ref{PropHomInd}.

\vskip8pt

To see that $\widehat{\St}|_H$ is irreducible, we consider the exact sequence \ref{contSteinberg} and pass to locally analytic vectors to obtain the exact sequence
\begin{numequation}\label{locanSteinberg}
0 \to 1 \to \Ind^G_{P}(1)^\an \to \St^\an := \widehat{\St}^\an \to 0 \;,
\end{numequation}
cf. \cite[7.1]{ST-AlgOfDist}. The locally analytic Steinberg representation $\St^\an$ contains the smooth Steinberg representation $\St$, and one has an exact sequence
\begin{numequation}\label{Steinbergexact}
0 \lra \St \lra \St^\an \lra \Ind^H_{P_H}(\chi_{2\rho})^\an \lra 0 \;,
\end{numequation}
where the representation on the right is the locally analytic induction of the character $\chi_{2\rho}(\diag(a,d)) = \frac{a}{d}$. This latter principal series representation is (topologically) irreducible as representation of $H$, by \cite[3.1.6]{KisinStrauch} or \cite[3.5.2]{Orlik-Strauch10}. We can consider $\St^\an$ as the space of $E$-valued locally analytic functions on $G/P \cong H/P_H \cong \bbP^1(\Qp)$, modulo the space of constant functions. The map $\St^\an \to \Ind^H_{P_H}(\chi_{2\rho})^\an$ on the right of \ref{Steinbergexact} is then, in suitable local coordinates, given by differentiating functions. By \cite[4.3]{Kohlhaase-Cohomology}, this map has no continuous section. This observation, coupled with the fact that $\St$ is irreducible as $H$-representation, shows that $\St$ is the only non-zero closed proper $H$-subrepresentation of $\St^\an$.

\vskip8pt

Let $V \sub \widehat{\St}$ be a closed non-zero $H$-invariant subspace. Then the subspace of $H$-locally analytic vectors $V^\an \sub V$ is again non-zero (by, again, \cite[7.1]{ST-AlgOfDist}) and closed. It follows from what has been said above that $V^\an = \St$ or $V^\an = \St^\an$. In either case, $V^\an$ contain $\St$ which is dense in the continuous Steinberg representation $\widehat{\St}$, by definition. But then, as $V$ is supposed to be closed, $V = \widehat{\St}$. 
\end{proof}

\subsection{At most two irreducible constituents}

\begin{proposition}\hskip-6pt\footnote{We learned this fact from a comment by Matthew Emerton.}\label{Prop2components} Let $\Pi$ be an absolutely irreducible admissible unitary $p$-adic Banach space representation of $G$. Then $\Pi|_{SL_2(\Qp)}$ decomposes into at most two irreducible components.
\end{proposition}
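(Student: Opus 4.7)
The plan is to use the $p$-adic local Langlands correspondence for $GL_2(\Qp)$ to translate the question into a counting problem on the Galois side, and then to bound the count by reducing modulo $p$ and invoking Abdellatif's classification.

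By \ref{CorRes}, write $\Pi|_H = \Pi_1 \oplus \cdots \oplus \Pi_r$ with $G$ permuting the absolutely irreducible summands transitively; since the stabilizer of $\Pi_1$ in $G$ contains $Z \cdot H$, the integer $r$ divides $|G/ZH| = |\Qp^\x/(\Qp^\x)^2|$. A standard Clifford-theoretic calculation---using Schur's lemma for each absolutely irreducible $\Pi_i$ together with the fact that $\Pi$ has a central character---identifies $r$ with the cardinality of the twist-stabilizer
\[
X(\Pi) \;=\; \{\chi \colon \Qp^\x \to E^\x \text{ continuous} \mid \Pi \otimes (\chi \circ \det) \cong \Pi\},
\]
each element of which is automatically quadratic. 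It thus suffices to prove $|X(\Pi)| \le 2$.

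Next, I would invoke the $p$-adic local Langlands correspondence of Colmez-Dospinescu-Pa\v{s}k\={u}nas \cite[Thm.~1.4]{ColDoPa}---which rests on the essential surjectivity of Colmez's Montreal functor established by Pa\v{s}k\={u}nas \cite[Thm.~1.1]{Paskunas}---to attach to $\Pi$ a continuous absolutely irreducible two-dimensional $E$-representation $\psi \colon \cG_\Qp \to GL_2(E)$. Compatibility of the correspondence with character twisting (via local class field theory) identifies $X(\Pi)$ with $X(\psi) := \{\chi : \psi \otimes \chi \cong \psi\}$, and a direct computation shows that $X(\psi)$ is in bijection with the centralizer $\on{Cent}_{PGL_2(E)}(\bar\psi(\cG_\Qp))$ of the projective image.

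The main obstacle is to bound this centralizer by $2$. My approach would be to pass to the mod $p$ picture by choosing a $G$-stable $\cO_E$-lattice $L \subset \Pi$ (which exists since $\Pi$ is unitary), compatible with the decomposition $\Pi|_H = \bigoplus \Pi_i$, so that $\bar L|_H = \bar L_1 \oplus \cdots \oplus \bar L_r$ with each $\bar L_i$ non-zero; hence $r$ is bounded by the length of $\bar L|_H$ as a smooth admissible $H$-representation. Abdellatif's theorems \cite[Thm.~0.1, 0.7]{Abdellatif} bound the branching of any absolutely irreducible admissible smooth mod-$p$ representation of $GL_2(\Qp)$ to $SL_2(\Qp)$ by two irreducible components; combined with Pa\v{s}k\={u}nas' block-theoretic control of the Jordan-H\"older constituents of $\bar L$ as a $G$-representation (again from \cite[Thm.~1.1]{Paskunas}), this bounds $r$ by $2$. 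The delicate point, and the real difficulty, lies in ensuring that the passage modulo $\varpi_E$ is compatible enough with the $H$-decomposition that Abdellatif's bound transfers cleanly back to characteristic zero---this is where the block structure from Pa\v{s}k\={u}nas is essential, since it rigidifies the reductions and rules out the possibility of hidden components emerging from $r > 2$ in characteristic zero but collapsing in characteristic $p$.
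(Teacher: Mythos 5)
Your final reduction-mod-$p$ idea is the right tool and is essentially what the paper does, but the way you set up the argument introduces detours that are either unnecessary or not actually justified, and the mod-$p$ step itself misses the key observation that makes the bound work.

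First, the Clifford-theoretic identification $r = |X(\Pi)|$ with the group of twist-stabilizing characters is a genuine theorem that would require proof in the Banach setting; it is not a formal consequence of Schur's lemma plus central character. Indeed the paper only establishes such a centralizer formula for $r$ a posteriori, in the trianguline de Rham case (Proposition~\ref{ThSummary}), and even there it relies on the full strength of Theorem~\ref{PiToH}. If the identification held abstractly for all $\Pi$, that entire section would collapse. Second, the passage to an absolutely irreducible two-dimensional Galois representation $\psi$ via the Montreal functor is not available for every $\Pi$: Pa\v{s}k\={u}nas' result (and its use in \cite[1.4]{ColDoPa}) leaves an ``ordinary'' case in which $\check{V}(\Pi)$ is not two-dimensional irreducible, so the centralizer reformulation on the Galois side does not cover all representations $\Pi$ in the statement. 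Third, and most tellingly, once you say ``the main obstacle is to bound this centralizer, so pass to mod $p$ and use Abdellatif,'' the whole Clifford/Galois apparatus has done no work: you could (and should) run the mod-$p$ argument directly on $r$.

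In that direct mod-$p$ argument your bound is incomplete. You claim $r \le \operatorname{length}(\bar L|_H)$ and then say Abdellatif + block theory gives $\le 2$, but the Jordan--H\"older series of $\overline{\Pi}|_H$ can contain more than two constituents (e.g.\ finite-dimensional characters and Steinberg pieces in the reducible principal series cases listed in \cite[2.14]{ColDoPa}). The argument that closes this gap, and which is missing from your proposal, is that the summands $\Pi_1,\dots,\Pi_r$ are permuted transitively by $G$ (Corollary~\ref{CorRes}), hence are all infinite-dimensional; therefore $(\overline{\Pi})^{\rm ss}|_H$ must have at least $r$ \emph{infinite-dimensional} irreducible constituents, and one then checks case-by-case (supersingular via Abdellatif's Theorem 0.7; the principal-series semisimplifications via his Theorem 0.1 and the explicit list in \cite[2.14]{ColDoPa}) that the number of infinite-dimensional constituents is at most two. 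There is also a lattice-compatibility subtlety you glide over: a $G$-stable lattice $L$ need not decompose as $\bigoplus (L\cap\Pi_i)$, so ``$\bar L|_H = \bar L_1\oplus\cdots\oplus\bar L_r$'' on the nose is not automatic; one should argue at the level of semisimplifications, as the paper does.
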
 

\begin{proof} Put $\overline{\Pi} = \Pi_{\le 1} \otimes_{\cO_E} k_E$, where $\Pi_{\le 1} = \{v \in \Pi \; | \;  \|v\| \le 1\}$ and $k_E$ is the residue field of $E$. This is a smooth $G$-representation. By \cite[1.4]{ColDoPa}, after possibly replacing $E$ by an unramified quadratic extension, there are two possibilities for $\overline{\Pi}$, namely

\vskip5pt

(i) $\overline{\Pi}$ is an absolutely irreducible supersingular representation.

\vskip5pt

(ii) The semisimplification $\overline{\Pi}^{\rm ss}$ of $\overline{\Pi}$ embeds into 
\begin{numequation}\label{PaskunasThm1.1.(ii)}   
    \pi\{\chi_1,\chi_2\} := \Big(\Ind^G_P(\chi_1 \otimes \chi_2 \omega^{-1})\Big)^{\rm ss} \oplus \Big(\Ind^G_P(\chi_2 \otimes \chi_1 \omega^{-1})\Big)^{\rm ss} \;,
\end{numequation}
where $\chi_1$ and $\chi_2$ are smooth characters $\bbQ_p^\x \to k_E^\times$, and $\omega: \bbQ_p^\x \to k_E^\x$ is the reduction of the cyclotomic character. 

\vskip5pt

It is a result of R. Abdellatif that in case (i) $\overline{\Pi}|_H$ decomposes into two irreducible representations, cf. \cite[Th\'eor\`eme 0.7]{Abdellatif}. In particular, $\Pi|_H$ cannot have more than two irreducible components.

\vskip5pt

Now suppose we are in case (ii). We consider the list given in \cite[2.14]{ColDoPa} which provides an explicit description of the decomposition of $\pi\{\chi_1,\chi_2\}$ into irreducible constituents. $\pi\{\chi_1,\chi_2\}$ is isomorphic to one (and only one) of the following:

\begin{enumerate}\item  $\ind^G_P(\chi_1 \otimes \chi_2 \omega^{-1}) \oplus \ind^G_P(\chi_2 \otimes \chi_1 \omega^{-1})$,  if  $\chi_1\chi_2^{-1} \neq \triv,\; \omega^{\pm 1}$\, ;

\vskip5pt

\item $\ind^G_P(\chi \otimes \chi \omega^{-1})^{\oplus 2}$,  if  $\chi_1 = \chi_2 = \chi$  and  $p \ge  3$;

\vskip5pt

\item $\Big(\triv \oplus \St \oplus \ind^G_P(\omega \otimes \omega^{-1})\Big) \otimes \chi \circ \det$,  if  $\chi_1 \chi_2^{-1} = \omega^{\pm 1}$ and $p \ge 5$;

\vskip5pt

\item $\Big(\triv \oplus \St \oplus \omega \circ \det \oplus \St \otimes \omega \circ \det\Big) \otimes \chi \circ \det$,  if  $\chi_1 \chi_2^{-1} = \omega^{\pm 1}$ and $p=3$;

\vskip5pt

\item $\Big(\triv \oplus \St \Big)^{\oplus 2} \otimes \chi \circ \det$, if  $\chi_1 = \chi_2$  and  $p = 2$.
\end{enumerate}

\vskip8pt

Here, $\St$ denotes the smooth Steinberg representation in characteristic $p$ which is irreducible by \cite[0.1]{Abdellatif}. The restriction of $\ind^G_P(\chi_1 \otimes \chi_2 \omega^{-1})$ to $H$ is just $\ind^H_{P_H}(\chi_1 \otimes \chi_2 \omega^{-1})$, and by \cite[0.1]{Abdellatif}, this representation is irreducible if and only if $\chi_1 \neq \chi_2 \omega^{-1}$. Thus we see that in case (1) the restriction is of length two. The same is true in case (2) because $\omega$ is not trivial when $p \ge 3$. In case (3), since $\omega^2 \neq \triv$ for $p \ge 5$, the representation $\pi\{\chi_1,\chi_2\}|_H$ has two infinite-dimensional constituents, and the same is true in cases (4) and (5), because the Steinberg representation $\St$ of $H$ is irreducible. 

\vskip8pt

Write $\Pi|_H = \Pi_1 \oplus \ldots \oplus \Pi_r$, with irreducible $H$-representations $\Pi_i$. By \ref{CorRes} the irreducible representations $\Pi_i$ are permuted by the action of $G$, and they must hence be all infinite-dimensional. Therefore, the representation $(\overline{\Pi})^{\rm ss}|_H$ must have at least $r$ infinite-dimensional irreducible constituents. By what we have just seen, $r$ can then be at most two.   
\end{proof}

\subsection{The main result}\label{subsec:main}

Let $\eta$ be a  continuous character of $G$. Since $H$ is the derived subgroup of $G$, $\eta$ is trivial on $H$. It follows that 
$\eta$ is of the form
\[
    \eta = \chi \circ \det,
\]
where $\chi$ is a continuous character of $\Q_p^\times$. We define $\cG_\Qp^*$ to be the group of continuous characters $\cG_\Qp \to \overline{\Qp}^\x$. As $\cG_\Qp$ is compact, any such character $\vep$ has image in $(E')^\x$ for a finite extension $E'$ of $\Qp$. When, in the following, we write $\psi \cong \vep \otimes \psi$, any such isomorphism is to be understood to exist over some finite extension $E'$ which contains the image of $\vep$, i.e., after possibly enlarging the coefficient field $E$ of $\psi$. We use the same convention when considering Banach space representations $\Pi$, and we set 
\[
   X(\psi)= \{\chi \in \cG_\Qp^*\midc \chi \otimes \psi \cong  \psi\}
\]
and 
\[
   X(\Pi)= \{\chi \in \cG_\Qp^* \mid (\chi \circ \det) \otimes \Pi \cong  \Pi \}.
\]

\begin{lemma}\label{lem:X}

Let $\psi: \cG_\Qp \to GL_2(E)$ be an absolutely irreducible continuous representation and $\Pi = \Pi(\psi)$. Then 
\begin{enumerate}

    \item[(i)] $X(\Pi)$ is finite and it can be identified with a subgroup of the group of characters of $G/ZH$.
    \item[(ii)] $X(\psi)=X(\Pi)$, and all nontrivial characters of these sets are of order two. 
    \item[(iii)] Denote by $\opsi: \cG_\Qp \to PGL_2(E)$ the corresponding projective representation. Let $S_\opsi$ be the centralizer in $PGL_2(\ovE)$ of the image of $\opsi$. Then $S_\opsi \cong X(\psi)$. 
   
\end{enumerate}
%In particular, $S_\opsi$ is finite and thus $S_\opsi = \cS_\opsi$.

\end{lemma}

\begin{proof}  

(i) Take $\chi \in X(\Pi)$. Since $(\chi \circ \det) \otimes \Pi \cong  \Pi$, the representations $(\chi \circ \det) \otimes \Pi$ and $\Pi$ have the same central character. It follows that 
$(\chi \circ \det)$ is trivial on $Z$. Since it is also trivial on $H$, it factors through 
$G/ZH$. The group $G/ZH$ is finite, so it has finitely many characters. It follows that $X(\Pi)$
is finite.

(ii) Note that the existence of an isomorphism $\psi \cong \vep \otimes \psi$ implies (by taking determinants) that $\vep^2$ is the trivial character. Furthermore, the $p$-adic Langlands correspondence  
$\psi \rightsquigarrow \Pi(\psi)$
is compatible with twisting by characters, i.e., for every $\chi \in \cG_\Qp^*$ we have
\[
    \Pi(\chi \otimes \psi) \cong (\chi \circ \det) \otimes \Pi(\psi) \;,
\]
cf. \cite[III.13]{ColmezDospinescu14}. Then $\chi \in X(\psi)$ if and only if $\chi \in X(\Pi)$.

(iii) Given $\vep \in X(\psi)$, there is an isomorphism $g_\vep: \psi \to \vep \otimes \psi$. The isomorphism $g_\vep$ is an automorphism of the vector space $E^2$ underlying $\psi$, i.e., we consider $g_\vep$ as an element of $GL_2(E)$. We note that $g_\vep$ is unique up to multiplication by a non-zero scalar, because if $g'_\vep: \psi \to \vep \otimes \psi$ is another isomorphism, then $g'_\vep g_\vep^{-1}$ is an automorphism of $\psi$, and thus a scalar. We have thus a well-defined map $X(\psi) \to PGL_2(E)$, $\vep \mapsto \overline{g_\vep}$, where $\overline{g_\vep}$ is the class of $g_\vep$ in $PGL_2(E)$. It is immediate, that for $\vep, \delta \in X(\psi)$ the composition $g_\vep \circ g_\delta$ is an isomorphism $\psi \to (\vep \delta) \otimes \psi$, and therefore $\overline{g_\vep} \circ \overline{g_\delta} = \overline{g_{\vep \delta}}$. We have therefore a group homomorphism $X(\psi) \to PGL_2(E)$, whose image, as is immediately seen, lies in the centralizer $S_\opsi$.
%$S_\opsi = \cS_\opsi$, cf. \ref{finite-centralizer}. 
Furthermore, this homomorphism is injective. It is also surjective, because if $\overline{g} \in S_\opsi$ is represented by the matrix $g \in GL_2(\overline{E})$, then there is a character $\vep$ such that $g\psi(x)  = \vep(x)\psi(x)g$, for all $x \in \cG_\Qp$, and then $g$ is equal to $g_\vep$, up to a non-zero scalar multiple.  
\end{proof}

\begin{proposition}\label{prop:card}
The cardinality of $X(\psi)$ divides 4.
\end{proposition}

\begin{proof} This can be shown as in the case of Weyl groups, cf. \cite[Prop. in 41.3]{BushnellHenniart_GL2}. Let us recall the arguments.\footnote{The arguments actually apply to the case of an absolutely irreducible 2-dimensional representation $\psi$ of {\it any} group $G$: the corresponding set $X(\psi)$ has always cardinality 1, 2, or 4.} If $|X(\psi)|>1$ then there is a continuous character $\vep$ of $\cG_\Qp$ and an isomorphism $f: \psi \stackrel{\simeq}{\lra} \vep \otimes \psi$ over some finite extension $E'$ of the coefficient field $E$. Comparing determinants we see that $\vep^2 = \triv$ is the trivial character, and we find that $\vep = \kappa_{F/\Qp}$ is the quadratic character whose kernel is $\cG_F$ for a quadratic extension $F/\Qp$. We claim that $\psi|_{\cG_F}$ is not absolutely irreducible: if it were, then $f$ would have to be multiplication by a non-zero scalar, but this cannot be, since $\vep = \kappa_{F/\Qp}$ is non-trivial on $\cG_\Qp$ and $\psi$ is assumed to be absolutely irreducible as $\cG_\Qp$-representation. Hence there is a one-dimensional subspace $W$ of $\psi$ which is stable under $\cG_F$, i.e., $\cG_F$ acts on $W$ by a continuous character $\xi$. It follows that $\psi \cong \ind^{\cG_\Qp}_{\cG_F}(\xi)$. We thus have $\psi|_{\cG_F} \cong \xi \oplus \xi^\sigma$, where $\sigma$ is the non-trivial Galois automorphism of $F$ over $\Qp$. Furthermore, $\ind^{\cG_\Qp}_{\cG_F}(\xi) \cong \ind^{\cG_\Qp}_{\cG_F}(\xi')$ for another continuous character $\xi'$ if and only if $\xi' = \xi$ or $\xi' = \xi^\sigma$. Now, if $|X(\psi)| > 2$, let $F/\Qp$ be such that $\kappa_{F/\Qp} \in X(\psi)$, and let $\chi \in X(\psi)$ be a non-trivial character different from $\kappa_{F/\Qp}$. After passing to the extension $E'$ we find $\psi \cong \chi \otimes \psi = \chi \otimes \ind^{\cG_\Qp}_{\cG_F}(\xi) = \ind^{\cG_\Qp}_{\cG_F}(\chi_F \xi)$, with $\chi_F = \chi|_{\cG_F}$. By what we have just observed, we then have $\chi_F \xi = \xi$ or $\chi_F \xi = \xi^\sigma$. As $\chi \neq \kappa_{F/\Qp}$ it follows that $\chi_F = \xi^\sigma/\xi$. Since $\cG_F$ is of index two in $\cG_\Qp$, it follows that there are two characters $\chi \in \cG_\Qp^*$ whose restriction to $\cG_F$ is $\xi^\sigma/\xi$, namely $\chi$ and $\kappa_{F/\Qp}\chi$, and thus $|X(\psi)|=4$ in this case. 
\end{proof}

\begin{remark}\label{defn-triply} Recall that a two-dimensional Galois representation is called {\it primitive} (resp. {\it simply-imprimitive}, resp. {\it triply-imprimitive}) if $X(\psi)$ has cardinality one (resp. two, resp. four).
\end{remark}

\begin{theorem}\label{Main}  Let $\psi: \cG_\Qp \to GL_2(E)$ be an absolutely irreducible continuous representation and $\Pi=\Pi(\psi)$.  Denote by $\opsi: \cG_\Qp \to PGL_2(E)$ the corresponding projective representation. Then, after possibly replacing $E$ by a finite extension, we have  

\begin{enumerate}
    \item[(i)]$\Pi|_H$ is either absolutely irreducible or it decomposes as a direct sum of two absolutely irreducible representations. Let $r$ be the number of components of $\Pi|_H$ (counting multiplicities).

    \item[(ii)]  $S_\opsi$ is finite, with cardinality 1,2, or 4.

    \item[(iii)] If $\psi$ is not triply-imprimitive, then  $\Pi_H$ is multiplicity free. Moreover,
    \[
        r=|S_\opsi| \leq 2.
    \]
    That is, the number of components of $\Pi|_H$
    equals the cardinality of $S_\opsi$.

    \item[(iv)] If $\psi$ is  triply-imprimitive, then  $|S_\opsi|=4$ and $\Pi$ restricts to $H$ with multiplicity two. More specifically,
    $\Pi|_H$  decomposes as a direct sum of two equivalent irreducible representations. 

\end{enumerate}

\end{theorem}

\begin{proof}

(i) This has already been shown in \ref{Prop2components}. 

(ii) Follows from Lemma~\ref{lem:X}(iii) and 
Proposition~\ref{prop:card}. Note that 
$|S_\opsi|=|X(\Pi)|$.

(iii)  Let $\pi$ be an irreducible subrepresentation of $\Pi|_H$.
After possibly replacing $E$ by a finite extension,
we may assume that $\pi$ is absolutely irreducible.
We apply Proposition~\ref{restrictionPiAbs}. 
Property (iv) tells us that $|X(\Pi)|=m^2|G:G_\pi|$, where $m$ is the multiplicity of $\pi$ in $\Pi|_H$.
Since $|X(\Pi)| \leq 2$, the multiplicity must be one and $|X(\Pi)|=|G:G_\pi|$.
Property (ii) then tells us that $\Pi|_H$ is a direct sum of $|X(\Pi)|$ inequivalent representations.   

(iv) If $\psi$ is  triply-imprimitive, then by definition $|X(\psi)|=4$ and hence $|S_\opsi|=4$.
The statement then follows from Corollary~\ref{cor:two}.   \end{proof}

\begin{proposition} Let $F/\Qp$ be a finite extension, and let $\psi: \cG_F \to \GL_2(E)$ be an absolutely irreducible 2-dimensional triply-imprimitive continuous representation. Then after possibly replacing $E$ by a finite extension, there is a continuous character $\tau$ such that $\tau \otimes \psi$ has finite image. In particular, $\tau \otimes \psi$ is de Rham with Hodge-Tate weights zero.  
\end{proposition}

\begin{proof} Let $F_2/F$ be a quadratic extension such that $\psi = \Ind^{\cG_F}_{\cG_{F_2}} (\xi)$. Here we consider $\xi$ as a character of the profinite completion $\widehat{F_2^\x}$ of the multiplicative group $F_2^\x$, which is isomorphic to the abelianized Galois group $\cG_{F_2}^\ab$. As is shown in the proof of \ref{prop:card}, one then
has $\xi/\xi^\sigma = \chi \circ N$, where $\sigma$ is the non-trivial Galois automorphism of $F_2$ over $F$, $N$ is the norm from $F_2$ to $F$, and $\chi$ is a character of $\widehat{F^\x}$. This means for any element $x$ in $\widehat{F_2^\x}$ one has 
\begin{numequation}\label{triply-eq0}
\xi(x/\sigma(x)) = \chi(x\sigma(x)) \;.
\end{numequation}
Let $i: \widehat{F^\x} \to \widehat{F_2^\x}$ be the canonical (injective) map induced by the inclusion $F^\x \to F_2^\x$, and define the character $\zeta$ of $\widehat{F^\x}$ by $\zeta(y) = \xi(i(y))$. If we multiply equation \ref{triply-eq0} by $\xi(x\sigma(x))$ we obtain
\begin{numequation}\label{triply-eq1}
\xi(x^2) = ((\chi \cdot \zeta) \circ N)(x)
\end{numequation}
It follows from \cite[II, 5.7 (i)]{Neukirch-ANT} that there is an isomorphism of topological groups 
\begin{numequation}\label{triply-eq2}
\widehat{F^\x} \cong \widehat{\Z} \oplus \Z/(q-1) \oplus \Z/(p^a) \oplus \bbZ_p^{\oplus d} \;, 
\end{numequation}
where $q$ is the number of elements in the residue field of $F$, $p^a$ is the number of roots of unity of $p$-power order in $F$, and $d = [F: \Qp]$. Choose a finite order character $\theta$ on $\widehat{F^\x}$ such that $\tau_1 := \chi \cdot \zeta \cdot \theta$ has the property $\tau_1(-1) = 1$. For example, we may choose $\theta$ to be the restriction of $\chi \cdot \zeta$ to the finite torsion subgroup of $\widehat{F^\x}$, which is a direct summand in $\widehat{F^\x}$ by \ref{triply-eq2}. Note also that $-1$ is the only non-trivial 2-torsion element in $\widehat{F^\x}$, which implies that if $y_1^2 = y_2^2$ in $\widehat{F^\x}$, then $y_1 = \pm y_2$. Then we define the character $\tau$ on $\widehat{F^\x}$ by $\tau(y) = \tau_1(y_1)$, where $y_1^2 = y^{-1}$. This is well-defined because $\tau_1(-1) = 1$. We thus have $\tau(y)^2 = \tau(y^2) = \tau_1(y^{-1}) = \tau_1(y)^{-1}$ for all $y \in \widehat{F^\x}$.
We tensor $\psi$ with $\tau$ and obtain
\begin{numequation}\label{triply-eq3}
\tau \otimes \psi = \tau \otimes \Ind^{\cG_F}_{\cG_{F_2}}(\xi) = \Ind^{\cG_F}_{\cG_{F_2}} ((\tau \circ N) \cdot \xi)
\end{numequation}
By \ref{triply-eq1} we find that for all $x \in \widehat{F_2^\x}$
\[((\tau \circ N) \cdot \xi)(x)^2 = \tau_1(N(x))^{-1}\xi(x^2) = \theta(N(x))^{-1}\] 
has finite order, and $(\tau \circ N) \cdot \xi$ is therefore itself a character of finite order. But then the induced representation in \ref{triply-eq3}, i.e., $\tau \otimes \psi$, has finite image.
\end{proof}

\subsection{Classification}

The classification of ordinary representations of $H$ is given in Proposition~\ref{prop:ordinary}. In this section, we
classify non-ordinary representations.

Let $\cG_2^0$ be the set of equivalence classes of absolutely irreducible continuous
representations $\psi: \cG_\Qp \to GL_2(\overline{\Qp})$.
(Any such representation has values in some $GL_2(E)$ with $E/\Qp$ finite.)
Let $\overline{\cG}_2^0$ be the   set of equivalence classes of the corresponding projective representations $\opsi: \cG_\Qp \to PGL_2(\overline{\Qp})$.

Suppose $\psi, \psi' \in \cG_2^0$ satisfy
$\opsi = \opsi'$. Then $\psi= \tau \otimes \psi'$, for some character $\tau$. It follows
$\Pi(\psi) = (\det \circ \tau) \otimes \Pi(\psi')$ and $\Pi(\psi)|_H = \Pi(\psi')|_H.$ Hence, 
the restriction 
$\Pi(\psi)|_H$ does not depend on the lift  $\psi$ of $\opsi$.

\begin{proposition}\label{prop:class}

Let $\pi$ be an 
absolutely irreducible non-ordinary admissible unitary Banach space representation of $H$.
Then there exists a unique 
$\opsi \in \overline{\cG}_2^0$
such that $\pi$ is equvivalent to
a constituent of $\Pi(\psi)|_H$, where $\psi \in \cG_2^0$ is any lift of $\opsi$. More specifically,

\begin{enumerate}

    \item[(1)] If $\psi$ is primitive, then 
    $\pi=\Pi(\psi)|_H$.
 
    \item[(2)] If $\psi$ is simply-imprimitive, then $\pi$ is isomorphic to one of the two non-isomorphic direct summands of $\Pi(\psi)|_H$.  

    \item[(3)] If $\psi$ is triply-imprimitive, then $\pi$ is isomorphic to the unique irreducible representation  which appears with multiplicity two in  $\Pi(\psi)|_H$.
\end{enumerate}

\end{proposition}

\begin{proof} Let $\pi$ be any absolutely irreducible admissible unitary Banach space representation of $H$. By \ref{prop:induced} there is an irreducible admissible unitary Banach space representation $\Pi$ of $G$, which we may assume to be absolutely irreducible, such that $\Pi|_H$ contains $\pi$. If $\Pi$ is ordinary, then so is $\pi$, because the restriction of a parabolically induced representation is parabolically induced (cf. the proof of \ref{prop:irred}). Hence, if $\pi$ is not ordinary, then $\pi$ is a constituent of a representation of the form $\Pi(\psi)$.

\vskip8pt

To see that the representations listed in (1)-(3)
are not ordinary and no two of them are isomorphic, 
we first recall that $\psi \mapsto \Pi(\psi)$
is a bijection between equivalence classes and that $\Pi(\psi)$ is non-ordinary. Then, we 
argue as follows.
Suppose we have an isomorphism 
$$\pi_1 \stackrel{\simeq}{\lra} \pi_2,$$
where 
$\pi_1$ and $\pi_2$ are absolutely irreducible admissible unitary Banach space
representations of $H$ and $\pi_2$ is not ordinary. As explained above,
we have $\pi_i \hra \Pi_i|_H$  for some 
irreducible admissible unitary Banach space
representation $\Pi_i$ of $G$.

We extend the central characters of $\pi_1$ and $\pi_2$ to characters of the center $Z$ of $G$, and obtain an isomorphism $\pi_1 \stackrel{\simeq}{\lra} \pi_2$ as representations of $ZH$. Inducing from $ZH$ to $G$ gives an isomorphism $\ind^G_{ZH}(\pi_1) \stackrel{\simeq}{\lra} \ind^G_{ZH}(\pi_2)$. By \ref{restrictionPiAbs}, $\Pi_1$ is a direct summand of $\ind^G_{ZH}(\pi_1)$, and $\ind^G_{ZH}(\pi_2)$ is a direct sum of representations of the form $\chi \otimes \Pi_2$. Hence the composition 
\[
    \Pi_1 \hra \ind^G_{ZH}(\pi_1) \stackrel{\simeq}{\lra} \ind^G_{ZH}(\pi_2) \twoheadrightarrow \chi \otimes \Pi_2 
\] 
maps $\Pi_1$ isomorphically to at least one of the direct summands $\chi \otimes \Pi_2$.
We thus obtain an isomorphism of $G$-representations $\Pi_1 \stackrel{\simeq}{\lra} \chi \otimes \Pi_2$.
This is impossible if 
$\Pi_1$ is ordinary, because $\Pi_2$
is non-ordinary.

Assume that $\Pi_1$ is non-ordinary.
Then $\Pi_1 \cong \Pi(\psi_1)$ and 
$\Pi_2 \cong \Pi(\psi_2)$, for some (unique)
$\psi_1, \psi_2 \in \cG_2^0$.
Note that $\chi = \tau \circ \det$ for some quadratic character $\tau$ of $\bbQ_p^\x$. 
Then 
\[
   \Pi(\psi_1) \cong \Pi_1 \cong  (\tau \circ \det) \otimes \Pi_2 \cong \Pi(\tau \otimes \psi_2).
\]
It follows $\psi_1 \cong \tau \otimes \psi_2$ and hence $\opsi_1 \cong \opsi_2$.
\end{proof}

\section{Connection with the classical local Langlands correspondence}\label{sec:LLC}

\subsection{Locally algebraic vectors}\label{sec:LocAlg}

 Let $\psi: \cG_\Qp  \to GL_2(E)$ be an absolutely irreducible representation and 
$\Pi=\Pi(\psi)$. The space of locally algebraic vectors 
$\Pi^\lalg$ is non-zero if and only if $\psi$ is de Rham with distinct Hodge-Tate weights $a<b$
\cite[Theorem 0.20]{Colmez10}.
In this case, by \cite[VI.6.50]{Colmez10}, $\Pi^\lalg$ decomposes as
\[
  \Pi(\psi)^\lalg = \Pi(\psi)^\alg \otimes \s(\psi),
\]
where $\Pi(\psi)^\alg \cong \det^a \otimes_E \Sym^{b-a-1}(E^2)$ is absolutely irreducible, and $\s(\psi)$ is a smooth representation.

In this section, we assume that 
$\psi: \cG_\Qp \to GL_2(E)$ is de Rham with distinct Hodge-Tate weights $a<b$.

There exists a finite Galois extension 
$L/\Qp$ such that $\psi$ becomes semistable when restricted to $\cG_L=\Gal(\overline{\Q}_p/L)$.
If $\psi$ is crystalline or semistable, then one may take $L=\Q_p$. In any case, we assume that $E$ contains $L$, and 
$
    |\Hom_{\Q_p}(L,E)| = |\Gal(L/\Qp)|.
$
There is an equivalence of categories $D_{\st}^L$ between the category of $E$-representations of $\cG_\Qp$ which become semistable when restricted to 
$\cG_L$ and the category of weakly admissible  filtered $(\varphi,N, L,E)$-modules \cite[Corollary 2.10]{Savitt}.
We refer to  \cite[Section 2]{Ghate-Mezard} or
\cite[Section 2]{Savitt}
for the definition of a weakly admissible $(\varphi,N, L,E)$-module $D$. It is a free
$(L_0 \otimes_\Qp E)$-module of finite rank, where $L_0$
is the maximal unramified extension of $\Qp$
contained in $L$. It is equipped with the Frobenius endomorphism $\vp$, the monodromy operator $N$, the action of $\Gal(F/\Qp)$,
and  a filtration on $D_L = L \otimes_{L_0} D$.

Let $D=D_{\st}^L(\psi)$ and let
$Fil(D_L)$ be the filtration on $D_L$ associated to $\psi$.
The jumps in the filtration are the negatives of the Hodge-Tate weights, so 
\[
    Fil^i(D_L) = \begin{cases}
       D_L & \text{ if } \, i \leq -b,  \\
       Fil^{-a}(D_L) & \text{ if } \, -b < i \leq -a, \\
       0 & \text{ if } \, i > -a.
    \end{cases}
\]

Using the decomposition $L_0 \otimes_\Qp E = \prod_{\Hom_{\Q_p}(L_0,E)} E$ one obtains from $D$ the 
corresponding Weil-Deligne representation
$\WD(D)$, also denoted by $\WD(\psi)$
(see section 2.4 of \cite{Ghate-Mezard}).
This is a two-dimensional $E$-vector space 
with an induced action of $(W_\Qp,N)$.
Let $$\phi=\WD(\psi)^{F{\rm -s.s.}}$$ be the  $F$-semisimplification of $\WD(D)$, in the sense of
\cite[8.5]{Deligne_fonctions_L}. 
We denote by $\ophi$ the associated projective 
Weil-Deligne representation.
We set $\cS_\ophi = S_\ophi/S_\ophi^\circ$.

\begin{remark}\label{rem:classic}
Let $\phi=\WD(\psi)^{F{\rm -s.s.}}$, and let $\s(\phi)$ be the smooth representation of $\GL_2(\Qp)$ associated to $\phi$ by the classical local Langlands correspondence.
Up to normalization, $\s(\psi)$ is equal to $\s(\phi)$, except if the latter representation would be 1-dimensional, in which case $\s(\psi)$ is the unique principal series representation which surjects onto this character, cf. \cite[before VI.6.50]{Colmez10} for details.\footnote{$\s(\psi)$ is denoted ${\rm LL}({\rm WD}(\psi))$ in loc. cit.}
For normalization, see \cite[Section 4]{BrSch}.
\end{remark}

\begin{lemma}\label{lem:decomp}

Let  $\Pi=\Pi(\psi)$ and $\s=\s(\psi)$.
If $\Pi|_H$ is decomposable, then $\s|_H$
is also decomposable.

\end{lemma}

\begin{proof}  If $\Pi|_H = \Pi_1 \oplus \Pi_2$, then both $\Pi_1$ and $\Pi_2$ contain locally algebraic vectors. Since the algebraic representation $\Pi^\alg$ remains  irreducible when restricted to $H$,
$\s|_H$ must be decomposable.
\end{proof}

\begin{remark}\label{rem:nonTrian}
Let  $\Pi=\Pi(\psi)$ and $\s=\s(\psi)$.
If $\psi$ is de Rham but not trianguline over any finite extension of $E$\footnote{In fact, if $\psi$ becomes trianguline after base change to a finite extension of $E$, then it does so for a quadratic extension of $E$, by \cite[4.17]{Colmez-triang}.}, then $\psi$ is neither cristabelline, by \cite[4.16, 4.17]{Colmez-triang}, nor semistable, by \cite[4.23]{Colmez-triang}.  By \cite[VI.6.50]{Colmez10}, the smooth representation $\s$ is supercuspidal. These references show also that the converse is true: if $\psi$ is absolutely irreducible and $\s$ is  supercuspidal, then $\psi$ is de Rham and it is not trianguline over any finite extension of the coefficient field $E$.
\end{remark}

\subsection{Trianguline case}

\begin{lemma}\label{lem:triang}

Suppose that 
$\psi: \cG_\Qp \to GL_2(E)$ is trianguline with distinct Hodge-Tate weights.
Let $\phi = \WD(\psi)^{F{\rm -s.s.}}$.
Then $|S_\opsi|=|\cS_\ophi| \leq 2.$

\end{lemma}

\begin{proof} Let $\Pi=\Pi(\psi)$ and $\s=\s(\psi)$. It follows from Theorem~\ref{Main} 
that  $|S_\opsi|\leq 2$ and that 
$\Pi|_H$ decomposes if and only if $|S_\opsi|=2$.
Similarly, we know from the classical Langlands correspondence
that $|\cS_\ophi|\leq 2$ and that 
$\s|_H$ decomposes if and only if $|\cS_\ophi|=2$.
Lemma~\ref{lem:decomp} then gives  $|S_\opsi|\leq |\cS_\ophi|$.
It remains to consider the case  
$|\cS_\ophi|=2$ and show that $|S_\opsi|=2$.

By Remark~\ref{rem:nonTrian}, $\s$ is not supercuspidal. Since $\s|_H$ decomposes, it is known from the representation theory of smooth representations that 
$\s = \ind_P^G(\a \otimes \beta |\, |^{-1})$,
where $\a, \beta$ are smooth characters of $\Q_p^\times$ and $\chi=\a\beta^{-1}$ is a quadratic
character.

On the other hand, we can find an explicit description of $D=D_{\st}^L(\psi)$ in 
\cite{BB}.
It is equal to $D(\a, \beta)$, as described 
in Definition 2.4.4 of loc.cit.
It can be shown by direct computation with matrices that $\beta = \chi \a$ and  $\chi^2=1$ imply 
$D \cong \chi \otimes D.$
Then $\psi \cong \chi \otimes \psi$.
It follows $\chi \in X(\psi)$ and by Lemma~\ref{lem:X} (iii), $|S_\opsi| \geq 2$.
Then of course $|S_\opsi| = 2$.
\end{proof}

\begin{remark}
For $\psi: \cG_\Qp \to GL_2(E)$ crystabelline, 
the centralizer $S_\ophi$ is infinite, while $S_\ophi(\psi)$
is a finite subgroup of $S_\ophi$ equivalent to the component group $\cS_\ophi$.
For instance, with notation as in the proof of the previous lemma, take $\beta = \chi \a$ with $\chi^2=1$, so $D \cong \chi \otimes D.$
In this case, $S_\ophi$ consists of diagonal and anti-diagonal matrices, while $S_\ophi(\psi)$ has two elements: $\bar 1$ and $\bar g$, where $g$ is an anti-diagonal matrix depending on the filtration $Fil(\psi)$.

\end{remark}

\begin{corollary}\label{cor:trian}
Suppose $\psi: \cG_\Qp \to GL_2(E)$  is trianguline.
Let $\Pi=\Pi(\psi)$.
Then $\Pi|_H$ is decomposable if and only if $(\Pi^\lalg)|_H$ is decomposable. In this case, $\Pi|_H$ has two irreducible non-isomorphic constituents $\pi_1$ and $\pi_2$. Furthermore, the representations $(\pi_1)^\lalg$ and $(\pi_2)^\lalg$ are the irreducible constituents of $(\Pi^\lalg)|_H$, and they are not isomorphic.  

\end{corollary}

\subsection{Non-trianguline case}

Suppose  $\psi:  \cG_\Qp \to GL_2(E)$ is non-trianguline de Rham with distinct Hodge-Tate weights $a<b$, and continues to be non-trianguline after base change to any finite extension of $E$.
Then $\phi = \WD(\psi)^{F{\rm -s.s.}}$ is irreducible,
and hence $\WD(\psi)$ is irreducible as well, an thus must be $F$-semisimple, i.e., 
\[
    \phi=\WD(\psi) =\WD(D) \,.
\]

From \cite[Lemma 2.1]{Savitt}, we know that   $Fil^{-a}(D_L)$ is a free $(L \otimes_\Qp E)$-module 
of rank 1. Using the decomposition $L \otimes_\Qp E = \prod_{\Hom_{\Q_p}(L,E)} E$ one obtains from $Fil^{-a}(D_L)$ a
1-dimensional $E$-subspace 
$$
    Fil^{-a}(\psi)
$$
of $\WD(\psi)$.\footnote{We can also define the corresponding filtration $Fil(\psi)$
on $\WD(\psi)$, but it will not play a role in our computation.}
We denote by $\ophi$ the associated projective 
Weil-Deligne representation. Given $\bar{g} \in S_\opsi \subset PGL_2(\overline{E})$, any lift $g \in GL_2(\overline{E})$ of $\bar{g}$ is naturally an isomorphism $g: \psi \to \vep_{\bar{g}} \otimes \psi$ with a quadratic character $\vep_{\bar{g}}$ of $\cG_\Qp$ which only depends on $\bar{g}$ and not the lift $g$, cf. the proof of the lemma below. This isomorphism in turn induces an isomorphism $\WD(g): \phi = \WD(\psi) \to \WD(\vep_{\bar{g}} \otimes \psi) = \vep_{\bar{g}} \otimes \WD(\psi)$, and can thus be considered as an automorphism of the vector space $\WD(\psi)$ underlying $\phi$, i.e., as an element of $GL(\WD(\psi))$. We can then consider the corresponding automorphism modulo scalars $\overline{\WD(g)} \in PGL(\WD(\psi))$, which does not depend on the lift $g$ and we henceforth denote by $\WD(\bar{g})$. This automorphism commutes then with the image of $\phi$ and is therefore an element of the centralizer $S_\ophi$ in $PGL(\WD(\psi))$ of the image of $\ophi$. It is clear that this construction furnishes a group homomorphism
\begin{numequation}\label{WDhom}
    \WD: S_\opsi \to S_\ophi \;.
\end{numequation}

\begin{lemma}\label{lem:SphiPsi}
The homomorphism \ref{WDhom} is injective and its image is the subgroup $S_\ophi(\psi)$ of
$ S_\ophi$ consisting of elements which preserve the subspace $Fil^{-a}(\psi)$ of $\WD(\psi)$.
\end{lemma}

\begin{proof} 

Let $\bar{g} \in S_\opsi \subset  PGL_2(\ovE)$.
Then
$
\bar{g} \bar{\psi}(x) \bar{g}^{-1} = \bar{\psi}(x)
$
for all $x \in \cG_\Qp$.
If we represent $\bar{g}$ by an element $g$ in $GL_2(E)$, we find that
\[
g \psi(x) g^{-1} = \vep_{\bar{g}}(x) \psi(x)
\]
for some scalar $\vep_{\bar{g}}(x) \in \ovE^\times$. Then, as usual, it follows that $\vep_{\bar{g}}$ is a
homomorphism $\cG_\Qp \to \ovE^\times$, and $\vep_{\bar{g}}(x)^2 = 1$ for all $x$, i.e.,
$\vep_{\bar{g}}$ is a quadratic character on $\cG_\Qp$.
Therefore, $g$ is an isomorphism
$g:\psi \to \vep_{\bar{g}} \otimes \psi$.
It induces an isomorphism $D(g)$ on $D=D_\st^L(\psi)$
which preserves the filtration $Fil(D_L)$. And this isomorphism in turn induces the isomorphism
$WD(g): \phi \to \vep_{\bar{g}} \otimes \phi$ which preserves
the subspace $Fil^{-a}(\psi)$.
Conversely, if we have an isomorphism $h: \phi \to \vep_{\bar{g}} \otimes \phi$ which commutes with the Weil group action and monodromy operator, and which preserves the  subspace $Fil^{-a}(\psi)$, 
it induces an isomorphism on $D$ (\cite[Proposition 4.1]{BrSch}) which preserves the filtration $Fil(D_L)$,
so it comes from a (unique) isomorphism $g:\psi \to \vep_{\bar{g}} \otimes \psi$.
\end{proof}

Suppose $\phi$ is imprimitive. Then
\begin{numequation}\label{psiInd}
    \phi = \ind^{W_\Qp}_{W_F} \chi,
\end{numequation}
where $F/\Qp$ is a quadratic extension and $\chi$ is a (smooth)
character of $W_F$, i.e., by local class field theory,
a smooth character of $F^\times$. 

Let $\s \in W_\Qp-W_F$, so that $W_\Qp = W_F \coprod W_F\s$.
Let $\chi^s$ be the conjugate character: 
$
    \chi^s(x) = \chi(\bar{x}),
$
where
$
    x \mapsto \bar{x}
$
is the non-trivial Galois automorphism. Define $f_0, f_1: W_\Qp \to E$  by
\begin{numequation}\label{f0f1} 
    f_0(y) = \begin{cases}  \chi(y), \quad & \text{if } y \in W_F,\\
      0, & \text{if } y \in W_F \sigma,
      \end{cases}
\qquad \quad
    f_1(y) = \begin{cases}  0, \quad & \text{if } y \in W_F,\\
      \chi(y \s^{-1}), & \text{if } y \in W_F \sigma.
      \end{cases}
\end{numequation}
These two elements form a basis of $\phi$.
For $(\a, \b) \in E^2 \setminus \{(0,0)\}$, let us denote by $Fil_{\a,\b}$ the one dimensional subspace spanned by $\a f_0 + \b f_1$. 
We write $[\a:\b]$ for the equivalence class of 
$(\a, \b)$ in the projective plane.
Recall that $Fil^{-a}(\psi)$ is
a one-dimensional subspace of $\phi$.
Hence, it is of the form
$Fil^{-a}(\psi)=Fil_{\a,\b}$ for some  $[\a:\b]$.

\begin{proposition}\label{S_opsi}

Suppose  $\psi:  \cG_\Qp \to GL_2(E)$ is non-trianguline de Rham with distinct Hodge-Tate weights $a<b$, and continues to be non-trianguline after base change to any finite extension of $E$.
 Let $\phi = WD(\psi)$  and let $Fil^{-a}(\psi)=Fil_{\a,\b}$. 

\begin{enumerate}

    \item[(a)] If $\phi$ is primitive, then $S_\opsi = S_\ophi =1$.
    
    \item[(b)] Suppose $\phi$ is imprimitive and write
    $\phi = \ind^{W_\Qp}_{W_F} \chi$
as in \eqref{psiInd}. Then

\begin{enumerate}

    \item[(i)] $|S_\opsi| \leq 2$, and $|S_\opsi| \leq |S_\ophi| \leq 4$. 
    
    \item[(ii)] If  $(\chi/\chi^s)^2 \ne 1$, then $|S_\ophi| = 2$. Moreover, $|S_\opsi| = 2$ if and only if $[\a:\b]=[1:0]$ or $[0:1]$.
    
    \item[(iii)] If  $(\chi/\chi^s)^2 = 1$, then $S_\ophi = \Z / 2\Z \oplus \Z / 2Z$. Moreover, $|S_\opsi| = 2$ if and only if $[\a:\b]$ is equal to one of the following
    \[
       [1:0], \, [0:1], \, [1:\gamma], \, [1:-\gamma], \, [1:\delta], \, or \, [1:-\delta],
    \]
    where $\gamma^2=\chi(\sigma)^2$ and $\delta^2=-\chi(\sigma)^2$.
\end{enumerate}

\end{enumerate}

\end{proposition}

\begin{proof} 
We know from Lemma~\ref{lem:SphiPsi} that
$ S_\opsi \cong S_\ophi(\psi)$.
Hence, our proof reduces to finding all
$h \in GL_2(E)$ such that
\begin{numequation}\label{Eq*}
    h \phi(x) h^{-1} = \vep_{\bar{h}}(x) \phi(x)
\end{numequation}
for all $x$ in the Weil group $W_\Qp$
and such that $h$ fixes  $Fil_{\a,\b}$.
Then we can identify  $S_\opsi$ with the  subgroup of $S_\ophi$ consisting of all elements fixing  $Fil_{\a,\b}$.
Now, (a) follows from \cite{Shelsted_notes}, case (iii) in section 11 ($\ophi$ tetrahedral or octahedral).

For (b), suppose $\phi = \ind^{W_\Qp}_{W_F} \chi$.
Define the basis $\{f_0, f_1\}$ as in \eqref{f0f1}.
 If $x$ is in $W_F$, then the matrix of $\phi(x)$ with respect to this basis is
\[
\diag(\chi(x),\chi^s(x)).
\]
Furthermore, we have for $y \in W_F$:
\[
(\s.f_0)(y\s) = f_0(y\s^2) = \chi(\s^2)\chi(y) = \chi(\s^2) f_1(y\s)
\]
and thus $\s.f_0 = \chi(\s^2) f_1$ and $\s.f_1 = f_0$, so the matrix of $\s$
with respect to $\{f_0,f_1\}$ is
the anti-diagonal matrix 
\[
    A = \left( \begin{matrix}  0 & 1 \\ 
                        \chi(\s^2) & 0
            \end{matrix} \right).
\]
If $h$ satisfies \eqref{Eq*}, then $h$ must permute the eigenspaces of the operators
$\phi(y)$ for $y\in W_F$, and
hence must be diagonal or anti-diagonal.

Suppose $h$ is a diagonal matrix, $h = \diag(a,d)$. Then $\vep_{\bar{h}}(x) = 1$, for all $x\in W_F$. Furthermore, from $h A h^{-1} = \vep_{\bar{h}}(\s) A$ we have
\[
    \left( \begin{matrix}  0 & a d^{-1}\\ 
                        da^{-1}\chi(\s^2) & 0
            \end{matrix} \right)
    = \vep_{\bar{h}}(\s) \left( \begin{matrix}  0 & 1 \\ 
                        \chi(\s^2) & 0
            \end{matrix} \right).
\]
It follows $d=a$ or $d=-a$.
If $d=a$, then $h=\diag(a,a)$ and $\bar{h}=\bar{1}$.

Set $h_1=\diag(a,-a)$ and take 
$\diag(1,-1)$ as a representative of $\bar{h}_1.$
Then
\[
   \left( \begin{matrix}  1 & 0\\ 
                        0 & -1
            \end{matrix} \right) 
       \left( \begin{matrix}  \a \\ \b \end{matrix} \right) 
   =
         \left( \begin{matrix}   \a \\ - \b \end{matrix} \right) .
\]
This is a scalar multiple of $\left( \begin{matrix}  \a \\ \b \end{matrix} \right)$ 
if and only if $\a=0$ or $\b=0$. 

Next, suppose $h$ is anti-diagonal, 
$
    h = \left( \begin{matrix}  0 & b\\ 
                        c & 0
            \end{matrix} \right). 
$
From \eqref{Eq*}, for $x \in W_F$ we have
\[
   h \left( \begin{matrix}  \chi(x) & 0\\ 
                       0 & \chi^s(x)
            \end{matrix} \right) h^{-1}
    = \left( \begin{matrix}  \chi^s(x) & 0\\ 
                     0 &  \chi(x) 
            \end{matrix} \right)
    = \vep_{\bar{h}}(\s) \left( \begin{matrix}  \chi(x) & 0 \\ 
                       0 & \chi^s(x)
            \end{matrix} \right).
\]
This is possible only for $(\chi/\chi^s)^2=1$.
Hence, if $(\chi/\chi^s)^2 \ne 1$, then $S_\ophi = \{ \bar{1},\bar{h}_1 \} $, where $h_1=\diag(1,-1)$. This completes the proof of (ii).

We continue the proof for $(\chi/\chi^s)^2=1$. Let $h$ be the anti-diagonal matrix as above. 
From $h A h^{-1} = \vep_{\bar{h}}(\s) A$, we have
\[
    \left( \begin{matrix}  0 & b c^{-1} \chi(\s^2)\\ 
                        c b^{-1} & 0
            \end{matrix} \right)
    = \vep_{\bar{h}}(\s) \left( \begin{matrix}  0 & 1 \\ 
                        \chi(\s^2) & 0
            \end{matrix} \right).
\]
It follows $c = \chi(\s^2) b$ or $c = -\chi(\s^2) b$.
We may take $b=1$.
Set
\[
    h_2 =  \left( \begin{matrix}  0 & 1\\ 
                      \chi(\s^2) & 0
            \end{matrix} \right)
            \quad \text{and} \quad
    h_3 =  \left( \begin{matrix}  0 & 1\\ 
                      -\chi(\s^2) & 0
            \end{matrix} \right).
\]
Then 
$
    S_\ophi = \{ \bar{1},\bar{h}_1,\bar{h}_2, \bar{h}_3 \} \cong
    \Z / 2\Z \oplus \Z / 2Z.
$

Assume that $h_2$ fixes  $Fil_{\a,\b}$. Then
\[
   \left( \begin{matrix}  0 & 1\\ 
                        \chi(\s^2) & 0
            \end{matrix} \right) 
             \left( \begin{matrix}   \a \\ \b \end{matrix} \right) 
   =    \left( \begin{matrix}  \b \\ \chi(\s^2) \a  \end{matrix} \right) 
   =
       \gamma  \left( \begin{matrix}   \a \\ \b \end{matrix} \right) .
\]
It follows $\beta = \gamma \a$ and $\gamma^2 = \chi(\sigma^2)$,
and $[\a:\b]= [1: \pm \gamma]$.
Similarly, for $h_3$
we get $[\a:\b]= [1:\delta]$ or $[1:-\delta]$, where 
$\delta^2 = -\chi(\sigma^2)$.
\end{proof}

From Proposition~\ref{S_opsi}, it is easy to describe the relation between $\Pi|_H$
and $\s|_H$. For simplicity, we 
assume in the corollary below that the Hodge-Tate weights are 0 and 1, so that $\s$ is the space of smooth vectors in $\Pi$.
The statement can be easily modified to a general case by replacing smooth vectors with locally algebraic vectors.

\begin{corollary}\label{cor:nontrian}

Suppose  $\psi:  \cG_\Qp \to GL_2(E)$ is non-trianguline de Rham with  Hodge-Tate weights 0 and 1, and continues to be non-trianguline after base change to any finite extension of $E$.
Let $\Pi=\Pi(\psi)$ and $\s=\s(\psi)$.
Then, after possibly replacing $E$ by a finite extension, we have

\begin{enumerate}

    \item[(a)] $\s$ is supercuspidal.

    \item[(b)] Both $\Pi|_H$ and $\s|_H$ are multiplicity free.
    
    \item[(c)] If $\phi$ is primitive, then both $\Pi|_H$ and $\s|_H$ are irreducible.

    \item[(d)] If $\phi$ is simply-imprimitive, then either

\begin{enumerate}

    \item[(i)] $\Pi|_H=\pi$ is irreducible and $\pi^\sm=\s|_H$ decomposes as a direct sum of  two  components, or

    \item[(ii)] $\Pi|_H=\pi_1 \oplus \pi_2$
    and $\s|_H= \pi_1^\sm \oplus \pi_2^\sm$, with each $\pi_i^\sm$ irreducible.

\end{enumerate}

    \item[(e)] If $\phi$ is triply-imprimitive, then either

\begin{enumerate}

    \item[(i)] $\Pi|_H=\pi$ is irreducible and $\pi^\sm=\s|_H$ decomposes as a direct sum of four components, or

    \item[(ii)] $\Pi|_H=\pi_1 \oplus \pi_2$
    and $\s|_H= \pi_1^\sm \oplus \pi_2^\sm$,
    where each $\pi_i^\sm$ decomposes as  a direct sum of  two  irreducible components, $\pi_i^\sm=\s_{i,1} \oplus \s_{i,2} $.

\end{enumerate}

\end{enumerate}

\end{corollary}

We remark that the case of $\psi$ triply-imprimitive de Rham is not included in Proposition~\ref{S_opsi}. In that case, the Hodge-Tate weights are equal and $\Pi(\psi)^\lalg=0$.

\begin{proof} We know from the classical local Langlands correspondence that $\s$ is supercuspidal, $\s|_H$ is multiplicity free,
and the number of components in $\s|_H$ is equal to $|S_\ophi|$. Similarly, since 
$\psi$ is not triply-imprimitive, we know from Theorem~\ref{Main} that  $\Pi|_H$ is multiplicity free
and the number of components in $\Pi|_H$ is equal to $|S_\opsi|$.
Then the simple comparison of $|S_\ophi|$ 
and $|S_\opsi|$, as described in Proposition~\ref{S_opsi}, together with Proposition~\ref{restrictionPialg}, implies all except (e)(ii). For (e)(ii), we need an additional observation that if $\Pi|_H=\pi_1 \oplus \pi_2$, then $\pi_1$ and $\pi_2$ are conjugate by some $g \in G$, by Proposition~\ref{restrictionPi}. Then 
$\pi_1^\sm$ and $\pi_2^\sm$ are also conjugate by $g$, and hence they must decompose as stated.
\end{proof}

\bibliographystyle{plain}
\bibliography{RefList}
\end{document}